\newtheorem{theorem}{Theorem}[section]
\newtheorem{corollary}[theorem]{Corollary} 
\newtheorem{proposition}[theorem]{Proposition}
\theoremstyle{definition}
\newtheorem{definition}{Definition}
\newtheorem{example}{Example}
\theoremstyle{remark}
\newtheorem*{remark}{Remark} 
\newcommand\preceqdot{\mathrel{\ooalign{$\preceq$\cr
\hidewidth\raise0.225ex\hbox{$\cdot\mkern0.5mu$}\cr}}}
\DeclareMathOperator{\supp}{supp}
\DeclareMathOperator{\rev}{rev}
\DeclareMathOperator{\outdeg}{outdegree}
\author[Ryota Inagaki et al.]{Ryota Inagaki\affiliationmark{1}\thanks{Supported by the MIT Department of Mathematics.}
  \and Tanya Khovanova\affiliationmark{1}\thanks{Supported by the MIT Department of Mathematics.}
  \and Austin Luo\affiliationmark{2}}
\title{Permutation-based Strategies for Labeled Chip-Firing on $k$-ary Trees}
\affiliation{
  Massachusetts Institute of Technology, Cambridge, USA\\
  Morgantown High School, Morgantown, USA}
\keywords{Labeled Chip-Firing, Directed Trees, Permutations.}
\begin{document}

\publicationdata{vol. 28:2}{2026}{2}{10.46298/dmtcs.16108}{2025-07-24; 2025-07-24; 2025-12-05}{2025-12-19}

\maketitle
\begin{abstract}
  Chip-firing is a combinatorial game on a graph, in which chips are placed and dispersed among its vertices until a stable configuration is achieved. We specifically study a chip-firing variant on an infinite, rooted, directed $k$-ary tree where we place $k^n$ chips labeled $0,1,\dots, k^n-1$ on the root for some nonnegative integer $n$, and we say a vertex $v$ can fire if it has at least $k$ chips. When a vertex fires, we select $k$ labeled chips and send the $i$th smallest chip among them to its $i$th leftmost child. A stable configuration is reached when no vertex can fire. In this paper, we focus on stable configurations resulting from specific firing strategies based on permutations of $1, 2, \dots, n$. We then express the stable configuration as a permutation of $0,1, 2, \dots, k^n-1$ and explore its properties, such as the number of inversions and descents.
\end{abstract}

\section{Introduction}
\label{sec:in}
Chip-firing is a game on a graph in which a discrete commodity, called chips, is placed on a graph. When a vertex has sufficiently many chips, the vertex \textit{fires} and disperses a chip to each neighbor. The study of chip-firing originates from the Abelian Sandpile explored by \cite{PhysRevLett} and by \cite{dhar1999abelian} in which a stack of sand disperses when it exceeds a certain height. The study of chip-firing on graphs originates from works such as those of Anderson, Lovász, Shor, Spencer, Tardos, Winograd (\cite{zbMATH04135751}), and Björner, Lovász, Shor (\cite{MR1120415}). Since those works were published, numerous versions of the chip-firing game, e.g., the chip-firing game $M$-matrices (\cite{MR3311336}) and invertible matrices (\cite{MR3504984}), to name a few, have been studied. Chip-firing has been connected to numerous other exciting areas of study, such as the study of critical groups (\cite{MR1676732}), matroids (\cite{MR2179643}), and potential theory (\cite{MR2971705}).

\subsection{Unlabeled chip-firing on directed graphs}
\label{sec:unlabeledchipfiring}

In the unlabeled chip-firing game on directed graphs, indistinguishable chips are put on the vertices of a directed graph $G = (V, E)$. For each vertex $v$, if $v$ has at least $\outdeg(v)$ chips, it can fire. In other words, when a vertex fires, it sends one chip to each out-neighbor and loses $\outdeg(v)$ chips. When the graph reaches a state in which no vertex can fire, the distribution of chips over the vertices is a \textit{stable configuration}. We formally define key chip-firing terminology in Section~\ref{sec:definitions}.

\begin{example}
Figure~\ref{fig:exampleunlabel} shows the unlabeled chip-firing process with $4$ indistinguishable chips initially placed at the root of an infinite, directed binary tree.
\end{example}

\begin{figure}[htbp]
\centering
    \subfigure[\centering Initial configuration with $4$ chips]{{\includegraphics[width=0.4\linewidth]{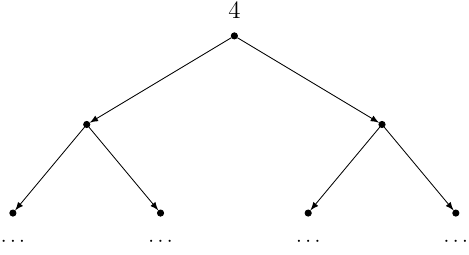} }}%
    \qquad
    \subfigure[\centering Configuration after firing once]{{\includegraphics[width=0.4\linewidth]{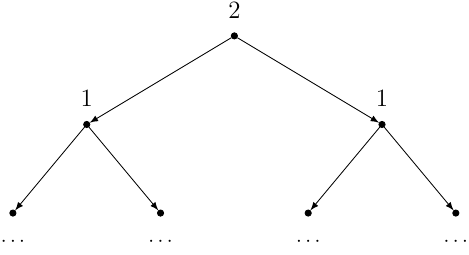} }}%
    \qquad
    \subfigure[\centering Configuration after firing twice]{{\includegraphics[width=0.4\linewidth]{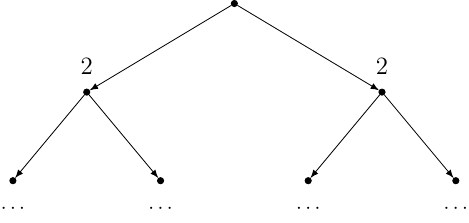} }}%
    \qquad
    \subfigure[\centering Stable configuration]{{\includegraphics[width=0.4\linewidth]{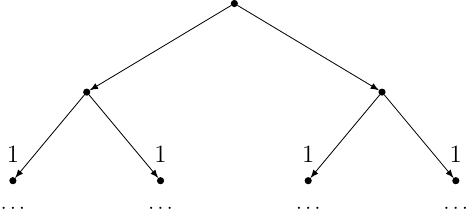} }}%
    \caption{Example of unlabeled chip-firing on an infinite directed, rooted binary tree}%
    \label{fig:exampleunlabel}
\end{figure}

A \textit{configuration} $\mathcal{C}$ is a distribution of chips over the vertices of a graph. A key property of unlabeled chip-firing on directed graphs is the following global confluence property, which is analogous to the global confluence of chip-firing on undirected graphs (c.f., Theorem 2.2.2 of \cite{klivans2018mathematics}) and stabilization of the Abelian Avalanche model of \cite{ MR1215018, gabrielov1994asymmetric}.

\begin{theorem}[Theorem 1.1 of \cite{MR1203679}]\label{thm:GlobalConfluence}
For a directed graph $G$ and initial configuration $\mathcal{C}$ of chips on the graph, the unlabeled chip-firing game will either run forever or end after the same number of moves and at the same stable configuration. Furthermore, the number of times each vertex fires is the same regardless of the sequence of firings taken in the game.
\end{theorem}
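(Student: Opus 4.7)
The plan is to establish global confluence via a local diamond lemma followed by an exchange argument, a strategy that has become standard for Abelian-type chip-firing systems.

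First, I would prove local confluence: if from configuration $\mathcal{C}$ vertices $u$ and $v$ are both fireable (possibly $u=v$ when $\mathcal{C}(u) \geq 2\,\outdeg(u)$), then firing them in either order yields the same configuration $\mathcal{C}_{uv} = \mathcal{C}_{vu}$, and each intermediate configuration is legal. The verification is a direct bookkeeping calculation using the fact that firing only depends on the chip count of the vertex being fired and changes chip counts by a fixed vector $\Delta_u$, so the net change after firing $u$ and $v$ is $\Delta_u + \Delta_v$, independent of order; legality of the second firing in each order follows because firing $u$ never removes chips from $v \neq u$.

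Next I would promote this to the exchange lemma: if $v$ is fireable from $\mathcal{C}$ and $\sigma = (w_1, w_2, \dots, w_m)$ is any legal firing sequence starting at $\mathcal{C}$, then either (i) $v$ appears in $\sigma$ and we can rearrange $\sigma$ so that $v$ fires first while the remaining firings still form a legal sequence leading to the same end configuration, or (ii) $v$ is still fireable after $\sigma$. This is proved by induction on $m$: if $v \neq w_1$, apply the diamond lemma to swap the two initial firings and invoke the inductive hypothesis; if $v = w_1$, we are done. The main technical point is that inserting or extracting a firing preserves legality of all subsequent firings, which is where one must verify that chip counts at every intermediate vertex remain nonnegative and large enough when needed; this is the step I expect to be the principal obstacle, since one must track legality through the whole tail of $\sigma$, not just one step.

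With the exchange lemma, the two main claims follow. Let $\sigma$ be any sequence terminating at a stable configuration with firing vector $\mathbf{f}_\sigma$ (counting how many times each vertex is fired), and let $\tau$ be any other legal sequence with firing vector $\mathbf{f}_\tau$. I would show by induction on the length of $\tau$ that $\mathbf{f}_\tau \leq \mathbf{f}_\sigma$ coordinatewise: the first firing of $\tau$ can be pulled to the front of $\sigma$ by the exchange lemma (case (ii) is excluded since $\sigma$ ends stably), and one continues inductively on the resulting sequences from the common configuration after that firing. In particular, if $\tau$ also terminates, the same argument with roles reversed gives $\mathbf{f}_\sigma = \mathbf{f}_\tau$. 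Since the final configuration equals $\mathcal{C} + \sum_w \mathbf{f}(w)\,\Delta_w$, which depends only on the firing vector, the stable configuration and total number of firings are uniquely determined, and if one sequence fails to terminate no finite terminating sequence can exist either.
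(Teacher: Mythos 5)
The paper does not prove this statement: it is imported verbatim as Theorem~1.1 of the cited reference (Bj\"orner--L\'ovasz), so there is no in-paper proof to compare against. Your outline is the standard and correct argument for Abelian-type confluence --- the local diamond lemma plus the exchange lemma, with the key monotonicity fact that firing $u$ never decreases the chip count at any $v \neq u$ (which is exactly what makes legality of the rearranged tails automatic, so the step you flag as the principal obstacle is in fact immediate once you note that the configuration after any prefix depends only on the multiset of vertices fired in it). This matches, in more concrete language, the abstract ``strongly convergent game'' machinery used in the cited source, and it handles the infinite tree without issue since a finite chip configuration forces every firing vector to be finitely supported.
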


\subsection{Labeled chip-firing on directed graphs}
\label{labeledchipfirng}

Labeled chip-firing is a type of chip-firing game in which each chip has a distinct label. Labeled chip-firing was originally invented by Hopkins, McConville, and Propp (\cite{MR3691530}) in the context of one-dimensional lattices. Much more recently, it has been studied in the context of undirected binary tree graphs by \cite{MR4827886} and by the authors of this paper (\cite{inagaki2024chipfiringundirectedbinarytrees}). In \cite{MR4887467}, the authors have introduced labeled chip-firing on directed, rooted $k$-ary trees.

In this paper, we continue the exploration from \cite{MR4887467} of labeled chip-firing on infinite directed $k$-ary trees for $k \geq 2$. In that paper, the chip-firing game is defined as follows. Consider an infinite, rooted directed $k$-ary tree, i.e., an infinite directed tree with a root in which each vertex has outdegree $k$. For each vertex, its $k$ children are ordered from left to right in ascending order. Since each vertex $v$ has $\outdeg(v) = k$, a vertex can fire when it has at least $k$ chips. When a vertex fires, we pick a set of $k$ chips on the vertex to disperse and, for each $j \in [k]$, the $j$th smallest chip in the set is sent to the $j$th leftmost child of the fired vertex.

In labeled chip-firing, the global confluence property does not hold. In other words, depending on which sets of chips are fired during the stabilization process, one can obtain different stable configurations from the same initial configuration of labeled chips.

\begin{example}
Consider a directed binary tree where each vertex has two children with $4$ labeled chips: $(0,1,2,3)$ at the root. Notice that since chips are only sent along directed edges, once a chip is sent to the left or right, it cannot go back. Therefore, if we fire the pair of chips $(0,1)$ first, we end up with a different stable configuration than if we fire the pair $(1,2)$ first. Figure~\ref{fig:confluencebreak} illustrates these initial firings. 
\end{example}

\begin{figure}[h]
\centering
    \subfigure[\centering Configuration after firing $(0,1)$]{{\includegraphics[width=0.4\linewidth]{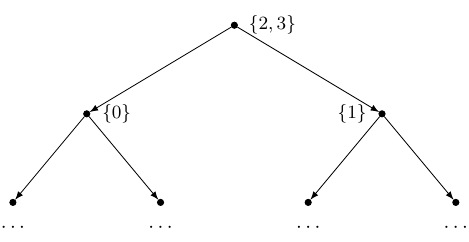} }}%
    \qquad
    \subfigure[\centering Configuration after firing $(1,2)$]{{\includegraphics[width=0.4\linewidth]{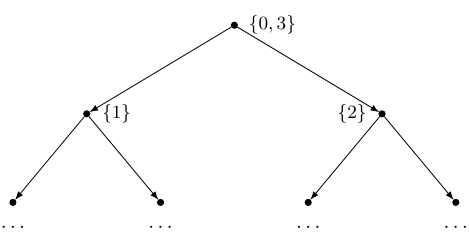} }}%
    \caption{Example of confluence breaking}%
    \label{fig:confluencebreak}
\end{figure}

As mentioned in \cite{MR4887467}, to obtain different stable configurations, we pick different sets of chips to fire. This property motivates a new branch of research in chip-firing, even in the context of relatively simple graphs like trees (e.g. \cite{MR4827886, inagaki2024chipfiringundirectedbinarytrees}).

\subsection{Motivation}
In the final section of \cite{MR4827886}, Musiker and Nguyen pose the following questions for labeled chip-firing on undirected binary trees:
\begin{itemize}
    \item What are the possible stable configurations?
    \item How many stable configurations are there?
\end{itemize}
In \cite{inagaki2024chipfiringundirectedbinarytrees}, the authors of this paper provided a partial answer to the second question by upper-bounding the number of possible stable configurations in the setting of undirected binary trees. In \cite{MR4887467}, the authors addressed the above questions, but in the context of chip-firing on the directed $k$-ary trees. 

\subsection{Our Objective and Main Definition}\label{sec:ObjectiveMainDefinition}
In this paper, we continue our work describing the possible stable configurations from chip-firing on $k$-ary trees by considering permutation-based strategies of the game, of which the ``bundling strategies" described in \cite{MR4887467} are a special case.

\begin{definition}\label{def:maindef}
For a permutation $w = w_1w_2\dots w_n$ of $1,2, \dots, n$ and chips $0,1\dots, k^n-1$ starting at the root of the $k$-ary tree, we define a \textit{permutation-based chip-firing strategy} $F_w$ to be so that when firing vertices at the $i$th layer, chips are dispersed so that the $j$th leftmost child of the fired vertex receives chips whose $k$-ary expansion has $j-1$ for its $w_i$th most significant digit.\end{definition}

\begin{example}\label{ex:F132}
    Consider the directed binary tree with $2^3$ labeled chips $0,1, \dots, 7$ initially at the root and permutation $w = 132$. As illustrated in Figure~\ref{fig:132strategy}, we use the firing strategy $F_{132}$ to stabilize the configuration of chips. Since $ w_1=1$, we fire the root vertex (i.e., the sole vertex in the first layer) so that the left child gets chips whose labels have binary expansions starting with $0$ and the right child gets chips whose labels have binary expansions starting with $1$. Then, because $w_2= 3$, we fire each vertex $v$ in the second layer so that the leftmost child of $v$ obtains chips that were on $v$ whose least significant bit in the binary expansion is $0$, and the rightmost child of $v$ gets chips from $v$ whose least significant bit is $1$. Finally, since $w_3= 2$, we observe that for each vertex $v$ in the third layer of the tree, we find that the left child of that vertex will receive chips from $v$ whose second most significant bit is $0$, and the right child will receive chips whose second most significant bit is $1$.
\end{example}

\begin{figure}[htbp]
\centering
    \subfigure[\centering Initial configuration]{{\includegraphics[width=0.35\linewidth]{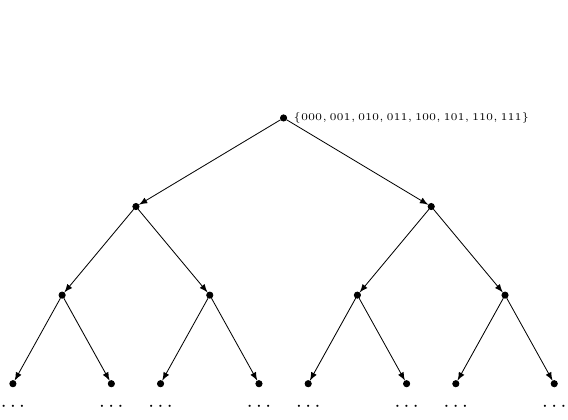} }}%
    \qquad
    \subfigure[\centering Configuration after sorting and firing by digit $1$]{{\includegraphics[width=0.35\linewidth]{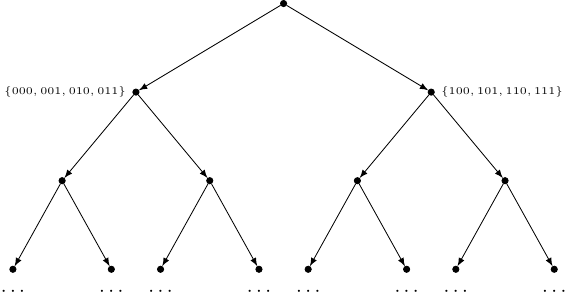} }}%
    \qquad
    \subfigure[\centering Configuration after sorting and firing by digit $3$]{{\includegraphics[width=0.35\linewidth]{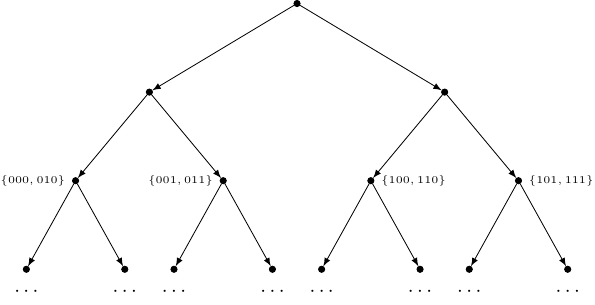} }}%
    \qquad
    \subfigure[\centering Stable configuration after sorting and firing by digit $2$]{{\includegraphics[width=0.35\linewidth]{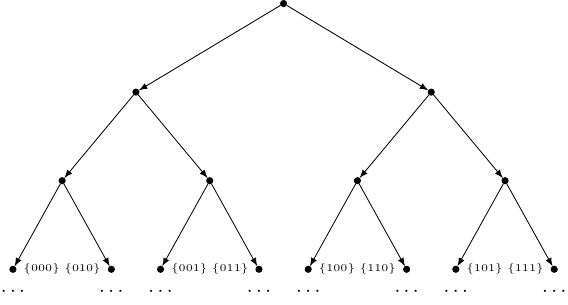} }}%
    \caption{Example of firing with strategy $F_{132}$}
    \label{fig:132strategy}
\end{figure}

\subsection{Roadmap}

In Section~\ref{sec:prelim}, we give preliminaries and definitions that are used throughout the paper. 

In Section~\ref{sec:bundling}, we formally introduce the notion of chip-firing strategies $F_w$ corresponding to permutations $w \in S_n$ that produce several interesting properties in the stable configuration. For every chip $c$ and firing strategy $F_w$, we calculate the final destination of chip $c$. We show that if permutation $w$ is lexicographically earlier than permutation $w'$, then the stable configuration corresponding to $w$ is lexicographically earlier than the one corresponding to $w'$.

In Section~\ref{sec:inversions}, we then prove a formula for the number of inversions in stable configurations resulting from $F_w$. We study properties of the set of possible numbers of inversions resulting from permutation-based strategies. In particular, we show that for a $k$-ary tree when starting with $k^n$ chips at the root, the number of inversions is always divisible by $\frac{(k-1)^2k^n}{4}$.

Afterward, in Section~\ref{sec:descents}, we show that the descent set of a stable configuration resulting from applying $F_w$ is a function of the permutation $w$. We calculate this function. We show that possible descents have to be divisible by $k$. We show a connection between the number of descents in $w$ and the configuration resulting from $F_w$ and the order of the reverses of their Lehmer codes.

In Section~\ref{sec:TailsExamples}, we explore permutations with increasing or decreasing tails.

\section{Preliminaries and Definitions}
\label{sec:prelim}
In this section, we introduce key definitions for chip-firing on directed trees, which are similar to those introduced in our previous paper \cite{MR4887467}.
\subsection{Permutations}
For natural number $n$, we define $[n] := \{1, 2, \dots, n\}$. We define the permutations $S_n$ of $[n]$ to be the reorderings of the sequence $1,2, \dots, n$.

For permutation $w$ of length $n$, we say that a pair of indices $a, b \in [n]$ is an \textit{inversion} if both $w_a < w_b$ and $b < a$. We say that an index $i\in [n-1]$ is a \textit{descent} of $w$ if $w_i > w_{i+1}$. For $w \in S_n$, the \textit{descent set} of $w$ is the set of all descents of $w$.

For permutation $w=w_1w_2\dots w_n$, a \textit{decreasing tail} is the longest suffix of permutation $w_rw_{r+1}\dots w_n$ such that it is decreasing: $w_r > w_{r+1} > \dots > w_n$ and either $r=1$ or $w_{r-1} < w_r$. We similarly define an \textit{increasing tail} of $w$.

\subsection{The underlying graph}
\label{sec:definitions}
The underlying graph for this paper is the infinite, rooted, directed $k$-ary tree. 

In a directed graph, we say that a vertex $v$ has \textit{parent} $v_p$ if there is a directed edge $v_p \to v$. If $v$ has parent $v_p$, we say that $v$ is the \textit{child} of $v_p$.  Furthermore, we say that a vertex $u$ is a \textit{descendant} of vertex $v$ if there exists a directed path from $v$ to $u$. We define a \textit{rooted tree} to be a directed graph in which every vertex, except for a designated vertex called the \textit{root}, has exactly one parent. An \textit{infinite directed k-ary tree} is an infinite directed rooted tree where each vertex has outdegree $k$.  A vertex $v$ is on layer $\ell+1$ if $v$ can be reached from the root via a directed path traversing $\ell$ vertices. In our convention, the root $r$ is on layer $1$.

\subsection{Unlabeled chip-firing on directed \texorpdfstring{$k$}{k}-ary trees}
\label{sec:unlabeled}

In our setting, we denote the \textit{initial configuration} of chip-firing as a placement of $N$ chips on the root. A vertex $v$ can \textit{fire} if and only if it has at least $\outdeg(v)=k$ chips. When vertex $v$ \textit{fires}, it transfers a chip from itself to each of its $k$ children. We define a \textit{stable configuration} to be a placement of chips over the vertices such that no vertex can fire.

Consider unlabeled chip-firing on infinite directed $k$-ary trees when starting with $k^n$ chips at the root, where $n \in \mathbb{N}^+$. As the stable configuration and the number of firings do not depend on the order of firings, we can assume that we start from layer 1 and proceed by firing all the chips on the given layer before going to the next layer. Thus, for each $i \in [n]$, each vertex on layer $i$ fires $k^{n-i}$ times and sends $k^{n-i}$ chips to each of its children. In the stable configuration, each vertex on layer $n + 1$ has exactly $1$ chip, and for all $i \neq n + 1$, the vertices on layer $i$ have $0$ chips.

\subsection{Labeled chip-firing on directed \texorpdfstring{$k$}{k}-ary trees}
\label{sec:labeled}
In the setting of labeled chip-firing, when a vertex fires, it chooses and fires $k$ of its chips so that among those $k$ chips, the one with the $i$th smallest label gets sent to the $i$th leftmost child from the left. A \textit{strategy} is a procedure dictating an order in which $k$-tuples of chips on a vertex get fired from which vertex.

In this paper, we assume $k \geq 2$ since if $k =1$ and the initial configuration has a nonzero number of chips, then the chip-firing process does not stop. Also, we always start with $k^n$ chips at the root, since in this case, the chip-firing game has desirable symmetries.

We assume that our chips are labeled by numbers 0 through $k^n-1$. In this setting, we can represent chip labels through their $k$-ary expansion, i.e., a $k$-ary string of length $n$ that can start with zeros.

We write each stable configuration as a permutation of $0, 1, 2, \dots, k^{n}-1$, which is the sequence of chips in the $(n+1)$st layer of the tree in the stable configuration, read from left to right. For instance, the stable configuration in Figure~\ref{fig:directedex} would be denoted by the permutation $0, 1, 2, 3$.

We now give an example of a labeled chip-firing game on the directed $k$-ary tree for $k=2$.
\begin{example}
Consider a directed binary tree with $4$ labeled chips: $(0,1,2,3)$ at the root. Figure~\ref{fig:directedex} shows a possible sequence of firings leading to a stable configuration.
\end{example}
\begin{figure}[htbp]
    \centering
    \subfigure[\centering Initial configuration with $4$ chips]{{\includegraphics[width=0.4\linewidth]{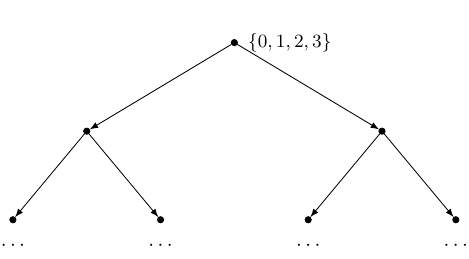} }}
    \qquad%
    \subfigure[\centering Configuration after firing root once]{{\includegraphics[width=0.4\linewidth]{fourchipex22} }}%
    \qquad
     \subfigure[\centering Configuration after firing root a second time]{{\includegraphics[width=0.4\linewidth]{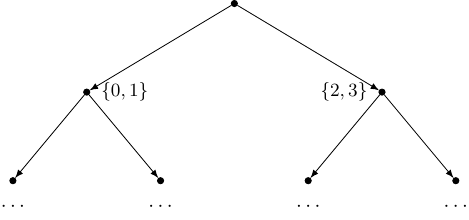} }}%
    \qquad
    \subfigure[\centering Stable configuration]
    {{\includegraphics[width=0.4\linewidth]{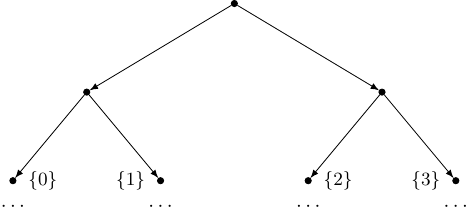} }}%
    \caption{Example of labeled chip-firing in a directed binary tree with $4$ chips}
    \label{fig:directedex}
\end{figure}

In the previous example, observe that any order in which the same pairs of chips are fired from the same vertex yields the same distribution of chips to the children. This is a fact that holds in general: in chip-firing on directed $k$-ary trees, given that a vertex fires a set of $k$-element tuples of labeled chips, any order in which those $k$-tuples of chips are fired yields the same distribution of chips to the children.

\section{Permutation-Based Strategies of Chip-Firing}
\label{sec:bundling}

In this section, we begin our discussion on stable configurations on $k$-ary trees resulting from permutation-based firing strategies (Definition~\ref{def:maindef}). Recall that for permutation $w \in S_n$, the \textit{permutation-based firing strategy} $F_w$ is the following procedure: for each $i \in [n]$, vertex $v$ in layer $i$, and $j \in \{0, 1, 2, \dots, k-1\}$, fire each vertex so that all chips in $v$ that have $j$ as the $w_i$th digit from the left are transferred to the $(j+1)$th child of $v$. We use $\mathcal{C}_{k, n, w}$ to denote the stable configuration resulting from applying the strategy $F_w$ to a $k$-ary tree with $k^n$ labeled chips $0,1\dots, k^n-1$. We express the stable configuration as the sequence of chips in the $n+1$ first layer read from left to right, which is a permutation of $0, 1, \dots, k^n-1$.

\begin{example}
Suppose we have a binary tree starting with $2^3$ labeled chips and a permutation $w = 132 \in S_3$. Example~\ref{ex:F132} and Figure~\ref{fig:132strategy} illustrates the firing strategy $F_{132}$. We write the corresponding stable configuration $\mathcal{C}_{2,3,132}$ as 0, 2, 1, 3, 4, 6, 5, 7.
\end{example}

\begin{definition}
Each vertex $v$ on the layer $\ell$ can be defined by a string of length $\ell-1$ of integers 1 through $k$, where the $i$th term is $j$ if the path from the root to $v$ passes through the $j$th child on layer $i+1$. We call this string a \textit{traversal string}. For the traversal string $t$ of length $i-1$ of integers $1$ through $k$, we use $v_t$ to denote the vertex defined by the string $t$.
\end{definition}

Given a permutation-based strategy $F_w$ and the label of a chip, one can calculate where the chip traverses during the chip-firing game and where it lands in the stable configuration.

\begin{proposition}\label{prop:GeneralUnbundlingOperation}
Consider a firing strategy $F_w$, corresponding to the permutation $w \in S_n$, and let $t=t_1t_2\dots t_{n'}$ be a traversal string whose terms are in $\{1, \dots, k\}$ for $n' \in [n]$. Then the set of chips that arrive at $v_t$ is exactly the chips $c$ with $k$-ary expansion $c=a_1a_2\dots a_{n'}$ such that $a_{w_i} = t_i-1$ for each $i \in [n']$.

In particular, for $n'=n$, the set of chips on $v_t$ in the stable configuration consists of only the chip with k-ary expansion $a_1a_2\dots a_{k}$ such that $a_{w_i}=t_i-1$ for each $i \in [n]$.
\end{proposition}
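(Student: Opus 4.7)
The plan is to prove the statement by induction on $n'$, the length of the traversal string, exploiting the layer-by-layer structure of the strategy $F_w$. The key observation is that the definition of $F_w$ directly translates firing at layer $i$ into a constraint on the $w_i$-th $k$-ary digit of a chip's label, so the inductive argument amounts to accumulating these constraints one layer at a time.

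For the base case $n'=1$, I would note that the root $v_\emptyset$ initially holds all $k^n$ chips, and the definition of $F_w$ sends each chip with $w_1$-th digit equal to $j$ to the $(j+1)$-th child, so $v_{t_1}$ receives exactly the chips whose $k$-ary expansion satisfies $a_{w_1} = t_1 - 1$. For the inductive step, I would assume the claim holds at length $n'-1$, so that $v := v_{t_1 t_2 \ldots t_{n'-1}}$ collects precisely the chips with $a_{w_i} = t_i - 1$ for $i \in [n'-1]$. When $v$ fires at layer $n'$, the definition of $F_w$ sends chips with $w_{n'}$-th digit $j$ to the $(j+1)$-th child of $v$, so $v_t$ inherits the additional constraint $a_{w_{n'}} = t_{n'} - 1$, completing the induction. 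The ``in particular'' statement for $n'=n$ then follows immediately: since $w \in S_n$ is a permutation of $[n]$, the $n$ equations $a_{w_i} = t_i - 1$ pin down every digit of the chip's expansion, leaving a unique chip at $v_t$.

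The main technical point to justify is that $F_w$ is actually realizable as a chip-firing strategy on the $k$-ary tree: a single firing of $v$ sends the $j$-th smallest of $k$ chosen chips to the $j$-th leftmost child, so the bulk digit-based dispersal demanded by $F_w$ is only valid if the $k^{n-n'+1}$ chips at $v$ can be partitioned into $k$-tuples in which the chip whose $w_{n'}$-th digit is $j-1$ is actually the $j$-th smallest of the tuple. I would resolve this by grouping chips that agree in every digit other than the $w_{n'}$-th; within each such tuple the chip with smaller $w_{n'}$-th digit is automatically smaller overall, and each tuple contains exactly one chip per value $0, 1, \ldots, k-1$ of that digit, so firing these tuples realizes precisely the dispersal prescribed by $F_w$. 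This observation simultaneously confirms that the strategy is well-defined and supplies the inductive step with a clean description of where each chip goes when $v$ fires.
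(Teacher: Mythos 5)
Your proof is correct and is essentially the argument the paper has in mind: the paper's own proof is the single line ``It follows from the definition of the strategy $F_w$,'' and your induction on $n'$ is precisely the unwinding of that definition along the root-to-$v_t$ path. Your additional verification that $F_w$ is realizable by legal $k$-tuple firings --- grouping the chips at $v$ that agree in every digit except the $w_{n'}$-th, so that within each group the chip with smaller $w_{n'}$-th digit is automatically the smaller chip --- is a worthwhile detail that the paper leaves implicit in Definition~\ref{def:maindef}.
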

\begin{proof}
    It follows from the definition of the strategy $F_w$.
\end{proof}

The above proposition helps us to calculate the number of possible final positions of a chip when the stabilization strategy of the tree corresponds to a permutation.
\begin{proposition} Consider a $k$-ary directed tree starting with labeled chips $0,1 \dots, k^n-1$ at the root. Let $c=a_1a_2\dots a_{n}$ (written in $k$-ary expansion) be a chip, and let $f_i$ be the frequency at which $i$ appears in the expansion.

Given that the $k$-ary tree is stabilized via a strategy corresponding to some permutation, there are $\frac{n!}{f_0! f_1! \cdots f_{k-1}!}$ possible positions of chip $c$ in the stable configuration. 
\end{proposition}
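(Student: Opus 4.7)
The plan is to apply Proposition~\ref{prop:GeneralUnbundlingOperation} directly and reduce the counting question to a standard multinomial enumeration of distinct rearrangements of a multiset.

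First I would fix the chip $c$ with $k$-ary expansion $a_1 a_2 \dots a_n$ and observe that, under the strategy $F_w$, Proposition~\ref{prop:GeneralUnbundlingOperation} (in the $n' = n$ case) pins down the unique leaf holding $c$ in the stable configuration: it is the vertex $v_t$ whose traversal string $t = t_1 t_2 \dots t_n$ satisfies $t_i = a_{w_i} + 1$ for every $i \in [n]$. Thus, associating to each permutation $w \in S_n$ the final position of $c$ is the same as associating to $w$ the word $(a_{w_1}, a_{w_2}, \dots, a_{w_n})$ obtained by applying $w$ to the digit sequence of $c$.

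Next I would count how many distinct such words arise as $w$ ranges over $S_n$. Two permutations $w, w'$ produce the same traversal string precisely when $a_{w_i} = a_{w'_i}$ for every $i$, i.e.\ when they rearrange the digits of $c$ identically as a word. The set of such words is exactly the set of distinct permutations of the multiset $\{a_1, a_2, \dots, a_n\}$, which contains digit $i$ with multiplicity $f_i$ for $i = 0, 1, \dots, k-1$. By the standard multinomial formula, the number of such distinct arrangements is
\[
\frac{n!}{f_0!\, f_1! \cdots f_{k-1}!}.
\]

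Since Proposition~\ref{prop:GeneralUnbundlingOperation} gives a bijection between traversal strings (hence final leaves) and these multiset rearrangements, this count is exactly the number of possible positions of $c$, completing the argument. There is no real obstacle here: the entire proof is a direct translation of Proposition~\ref{prop:GeneralUnbundlingOperation} into the language of multiset permutations, followed by the classical multinomial count. The only subtlety worth emphasizing is the observation that every rearrangement of the digits of $c$ is actually realized by some $w \in S_n$ (which is immediate, since any target word can be produced by a suitable permutation of the indices), so no potential position is omitted.
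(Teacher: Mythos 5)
Your proof is correct and follows essentially the same route as the paper: both identify the final vertex of $c$ via Proposition~\ref{prop:GeneralUnbundlingOperation} as the leaf with traversal string $(a_{w_1}+1)(a_{w_2}+1)\dots(a_{w_n}+1)$ and then count the distinct rearrangements of the digit multiset with the multinomial coefficient. Your added remark that every rearrangement is actually realized by some $w \in S_n$ is a nice explicit touch the paper leaves implicit.
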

\begin{proof}
    From Proposition~\ref{prop:GeneralUnbundlingOperation}, we know that for fixed chip $c=a_1a_2\dots a_n$, if strategy $F_w$ corresponds to the permutation $w=w_1w_2\dots w_n$, chip $c$ ends up in vertex $v_t$ where $t = (a_{w_1}+1)(a_{w_2}+1)\dots(a_{w_n}+1)$. We find that the range of this correspondence is given by the set of all permutations of $(a_{1}+1),(a_{2}+1),\dots,(a_{n}+1)$. Since each term in the expansion of $c$ is in $0,1, \dots, k-1$ and since the frequency of $i \in \{0,1, \dots, k-1\}$ in the expansion is $f_i$, we find that there are $\frac{n!}{f_0!f_1!\dots f_{n-1}!}$ possible values of $v_t$, the vertex in which chip $c$ ends up when firing is done via a permutation-based strategy.
\end{proof}

\begin{example}
    The second vertex in the stable configuration corresponds to the traversal string 111\ldots 1112 of length $n$. Suppose the firing strategy is described by permutation $w$, then the $k$-ary expansion of the chip that arrives at the second vertex is $c=a_1a_2\dots a_{n}$, where $a_{w_n} = 1$, and the other $a_i$ are zero. Thus, the chip is $k^{n-w_n}$.
\end{example}

There is a natural lexicographic order on permutations as well as on stable configurations. For $u, w$ that are either both permutations or both stable configurations, we use $u < w$ to denote that $u$ is lexicographically earlier than $w$.

Consider the following bijection on permutations in $S_n$: Take a permutation $w \in S_n$. Reverse the permutation, then subtract each term from $n+1$. We denote this bijection by $B$. For string $w$, we use $\rev(w)$ to stand for the reversal of $w$.

\begin{example}
    If we start with permutation 1243, reversing it gets 3421, and subtracting from 5555 gets 2134. Thus, $B(1243) = 2134$.
\end{example}

\begin{remark}
    Permutation $u$ being lexicographically earlier than $w$ does not imply that $B(u)$ is later or earlier than $B(w)$. For instance, compare two permutations $2341 < 3214$ in order. We find that $B(2341) = 4123 > B(3214) = 1432$; thus, the order is reversed. On the other hand, $2341 < 4321$ and $B(2341) = 4123 < B(4321) = 4321$, and the order is not reversed.
\end{remark}

\begin{theorem}\label{thm:LexicoGraphicOrder}
    Let $k \geq 2$ and $n\in \mathbb{N}$. If $w$ and $w'$ are in $S_n$, then the stable configuration $\mathcal{C}_{k, n, w}$ corresponding to $w$ is lexicographically earlier than the stable configuration $\mathcal{C}_{k, n, w'}$ corresponding to $w'$ if and only if $B(w) < B(w')$.
\end{theorem}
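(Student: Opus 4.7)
The plan is to derive a closed-form expression for the chip at each position on layer $n+1$ of $\mathcal{C}_{k,n,w}$, then use that formula to pinpoint the leftmost position where $\mathcal{C}_{k,n,w}$ and $\mathcal{C}_{k,n,w'}$ disagree, and finally match the resulting inequality with the comparison $B(w) < B(w')$.

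For step one, I would index the $k^n$ positions on layer $n+1$ from the left by $p \in \{0,1,\dots,k^n-1\}$, with $k$-ary expansion $p = \sum_{i=1}^n p_i k^{n-i}$ where $p_i \in \{0,\dots,k-1\}$. The traversal string of the $p$th vertex is $t_i = p_i + 1$, so Proposition~\ref{prop:GeneralUnbundlingOperation} applied with $n' = n$ tells me the chip landing there has $k$-ary digits satisfying $a_{w_i} = p_i$, i.e.\ $a_j = p_{w^{-1}(j)}$. Re-indexing via $j = w_i$ gives the key identity
\[
\mathcal{C}_{k,n,w}[p] \;=\; \sum_{j=1}^n p_{w^{-1}(j)}\, k^{n-j} \;=\; \sum_{i=1}^n p_i\, k^{n - w_i},
\]
which reduces the rest of the argument to algebra.

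For step two, let $D := \{i : w_i \neq w'_i\}$. Subtracting the closed-form expressions shows that $\mathcal{C}_{k,n,w}[p]$ and $\mathcal{C}_{k,n,w'}[p]$ automatically coincide whenever $p_i = 0$ for every $i \in D$, so any disagreement forces some nonzero $p_i$ with $i \in D$. Because larger $i$ corresponds to a smaller place value in $p$, the numerically smallest $p$ with a nonzero digit inside $D$ is $p^* := k^{n-j^*}$ where $j^* := \max D$, obtained by putting $p_{j^*} = 1$ and all other digits $0$. At $p^*$ the two chip values collapse to the single terms $k^{n-w_{j^*}}$ and $k^{n-w'_{j^*}}$, which are genuinely distinct since $w_{j^*} \neq w'_{j^*}$; this confirms $p^*$ really is the first position of disagreement. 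Therefore $\mathcal{C}_{k,n,w} < \mathcal{C}_{k,n,w'}$ if and only if $w_{j^*} > w'_{j^*}$.

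For step three, since $B(w)_\ell = n + 1 - w_{n+1-\ell}$, the largest index $j^*$ on which $w$ and $w'$ disagree corresponds to the smallest index $n+1-j^*$ on which $B(w)$ and $B(w')$ disagree, and at that index $B(w)$ is strictly smaller precisely when $w_{j^*}$ is strictly larger. Combining this with step two yields the equivalence. The main obstacle is a small subtlety in step two: one must rule out that a smaller $p$ could accidentally make the chips differ via some cancellation among the terms indexed by $D$, but the single-term structure of the sum at $p^*$ together with the place-value ordering makes this automatic once the closed-form expression is in hand.
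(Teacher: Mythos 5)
Your proposal is correct and follows essentially the same route as the paper's proof: both arguments reduce to locating the last index $j^*$ at which $w$ and $w'$ differ, observing that the two configurations agree at every position before $k^{n-j^*}$, and comparing the single chips $k^{n-w_{j^*}}$ and $k^{n-w'_{j^*}}$ at that position, which is exactly the comparison encoded by the first differing entry of $B(w)$ and $B(w')$. Your closed-form expression $\mathcal{C}_{k,n,w}[p]=\sum_{i=1}^n p_i k^{n-w_i}$ is just a cleaner algebraic packaging of the subtree argument the paper carries out on the tree itself.
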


\begin{proof}
Write $w =w_1w_2\dots w_n$ and $w'= w_1'w_2'\dots w_n'$. Let $i_0$ be the first index at which $B(w)_{i_0} \neq B(w')_{i_0}$. Correspondingly, $n+1-i_0$ is the largest index $j$ such that $w_{j} \neq w_j'$. Note that if $B(w') > B(w)$, then $B(w')_{i_0} > B(w)_{i_0}$, implying that $w_{n+1-i_0} > w'_{n+1-i_0}$. Likewise, if $B(w') < B(w)$, then $B(w')_{i_0} < B(w)_{i_0}$ and consequently $w_{n+1-i_0} < w'_{n+1-i_0}$.

Let $v$ denote the leftmost vertex in the layer $n+2-i_0$. After applying strategy $F_w$, vertex $v$ receives chips that have zeros in places $\{w_1, w_2, \dots, w_{n+1-i_0}\}$. Similarly, after applying strategy $F_w'$, vertex $v$ receives chips that have zeros in places $\{w_1', w_2', \dots, w_{n+1-i_0}'\}$. Since $n+1-i_0$ is the last index such that $w_j\neq w_j'$, we know that the sets $\{w_1, w_2, \dots, w_{n+1-i_0}\}$ and $\{w_1', w_2', \dots, w_{n+1-i_0}'\}$ are the same. That means vertex $v$ receives the same chips for both strategies. Both strategies starting from layer $n+2-i_0$ and below are identical, implying that the stable configurations of the subtree rooted at $v$ are the same for both strategies.

Let $v$ denote the second vertex on layer $n+2-i_0$. After applying strategy $F_w$, the set of chips on $v'$ is the set of all integers in $0,1,\dots, k^n-1$ that both have $0$'s in digits $w_1,w_2, \dots, w_{n-i_0}$ and have the digit $1$ in the $w_{n-i_0+1}$th digit. Similarly, after applying strategy $F_w'$, the set of chips on $v'$ is the set of all integers in $0,1,\dots, k^n-1$ that both have $0$'s in digits $w_1,w_2, \dots, w_{n-i_0}$ and have the digit $1$ in the $w_{n-i_0+1}'$th digit. The smallest chip of the subtree rooted at $v'$ is $k^{n-w_{n+1-i_0}}$ if we follow strategy $F_w$, and is $k^{n-w_{n+1-i_0}'}$ if we followed strategy $F_w'$. These chips will end up at the leftmost child of the subtree of $v'$.

Therefore, if $B(w) < B(w')$, then $w_{n+1-i_0} > w_{n+1-i_0}'$ and $k^{n-w_{n+1-i_0}} < k^{n-w_{n+1-i_0}'}$, implying that the stable configuration corresponding to $w'$ is lexicographically later than that of $w$. The case of $B(w) > B(w')$ is similar.
\end{proof}

\begin{example}
Consider a binary tree with $2^4$ chips. The stable configuration $\mathcal{C}_{2, 4, 3214}$ for $3214$ starts with $0, 1, 8, 9$, and the stable configuration $\mathcal{C}_{2, 4, 2341}$ for $2341$ starts with $0, 8, 1, 9$. Meanwhile, $B(3214) = 1432$ and $B(2341) = 4123$. We see that $B(2341)$ is lexicographically later than $B(3214)$, and the corresponding stable configuration is later too.
\end{example}

As $B$ is a bijection, we deduce that distinct permutations create distinct stable configurations, as we state in the corollary below.

\begin{corollary}
  Consider a $k$-ary tree with labeled chips $0, 1, \dots, k^n-1$ starting at the root. Let $w, w' \in S_n$. The configurations resulting from $w$ and $w'$ are distinct if and only if $w \neq w'$.
\end{corollary}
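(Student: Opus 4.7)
The plan is to deduce this corollary as an immediate consequence of Theorem~\ref{thm:LexicoGraphicOrder}. The forward direction, that $w = w'$ implies the same stable configuration, is immediate: the strategy $F_w$ is deterministic, since at each layer the partition of chips into $k$ groups (by the value of the $w_i$th $k$-ary digit) is entirely determined by $w_i$. Thus identical inputs produce identical outputs.

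For the reverse direction, the key observation is that $B \colon S_n \to S_n$ (reversal followed by the entrywise map $x \mapsto n+1-x$) is a bijection, since both reversal and the involution $x \mapsto n+1-x$ are bijections on $S_n$. Therefore $w \neq w'$ implies $B(w) \neq B(w')$. Since the lexicographic order is a total order, we then have either $B(w) < B(w')$ or $B(w) > B(w')$. In either case, Theorem~\ref{thm:LexicoGraphicOrder} yields a strict lexicographic inequality between $\mathcal{C}_{k,n,w}$ and $\mathcal{C}_{k,n,w'}$, so in particular these two stable configurations are distinct.

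There is essentially no obstacle here: the entire substance of the statement has already been carried out in the proof of Theorem~\ref{thm:LexicoGraphicOrder}, where distinctness was implicitly verified via the explicit description of the smallest chip reaching the second subtree on layer $n+2-i_0$. The only point to double-check is the bijectivity of $B$, which is a one-line verification.
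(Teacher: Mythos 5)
Your proposal matches the paper's argument exactly: the paper also derives the corollary immediately from Theorem~\ref{thm:LexicoGraphicOrder} by noting that $B$ is a bijection, so distinct permutations yield lexicographically comparable (hence distinct) stable configurations. The proof is correct and takes essentially the same route.
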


We answer the following key question throughout the rest of this paper: If we know some information about the permutation $w \in S_n$, what can we say about the stable configuration? To begin to answer this question, we have the following result.

\begin{proposition}
If $F_w$ is our firing strategy, then the permutation pattern $w$ appears in the terminal configuration.
\end{proposition}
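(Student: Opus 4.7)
The plan is to exhibit $n$ specific chips whose positions, read from left to right in the stable configuration, carry values whose relative order matches $w$. A convenient choice uses chips whose $k$-ary expansions involve only the digits $0$ and $1$, so that by Proposition~\ref{prop:GeneralUnbundlingOperation} they land at vertices whose traversal strings use only the symbols $1$ and $2$; this reduces comparing positions to a clean lexicographic comparison on binary-like strings.

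Concretely, for each $j \in [n]$ I would define $c_j$ to be the chip whose $k$-ary expansion has a $0$ in position $j$ and a $1$ in every other position, i.e., $c_j = \sum_{i \ne j} k^{n-i}$. Note that $c_j < c_{j'}$ if and only if $j < j'$. Applying Proposition~\ref{prop:GeneralUnbundlingOperation} with the relations $a_{w_i} = t_i - 1$, the traversal string $t^{(j)}$ of the final vertex holding $c_j$ satisfies $t^{(j)}_i = 1$ when $w_i = j$ and $t^{(j)}_i = 2$ otherwise; equivalently, $t^{(j)}$ has its unique $1$ at position $w^{-1}(j)$ and $2$'s elsewhere.

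The next step is to compare these traversal strings lexicographically. Since the two symbols involved are $1$ and $2$, the string whose unique $1$ appears at an earlier index is lex smaller, so $c_j$ lies to the left of $c_{j'}$ in layer $n+1$ exactly when $w^{-1}(j) < w^{-1}(j')$. Listing the chosen chips from left to right therefore yields the subsequence $c_{w_1}, c_{w_2}, \ldots, c_{w_n}$, and combined with the monotonicity of $j \mapsto c_j$ from the previous paragraph, the values at these positions have the same relative order as the entries of $w$, which is exactly the claim that $w$ appears as a pattern.

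The main subtlety to watch for is orientation rather than any deep obstacle. A tempting alternative --- using the chips $k^{n-j}$ with a single $1$ in their $k$-ary expansion --- produces the pattern $B(w)$ instead of $w$, mirroring the reversal that appears in Theorem~\ref{thm:LexicoGraphicOrder}. The choice above flips the roles of $0$ and $1$ in the expansion, which is precisely what makes the induced pattern come out as $w$; verifying this orientation on a small example such as $k=2$, $n=3$, $w=132$ is a good sanity check before writing the argument down.
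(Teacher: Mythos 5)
Your proof is correct and takes essentially the same approach as the paper: both arguments exhibit $n$ chips whose $k$-ary expansions have a single $0$ digit (in position $j$) and a constant nonzero filler elsewhere, then use Proposition~\ref{prop:GeneralUnbundlingOperation} to read off their traversal strings and compare positions lexicographically. The only difference is that the paper uses filler digit $k-1$ where you use $1$, which is immaterial to the argument.
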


\begin{proof}
Let $w = w_1w_2\ldots w_n$. We find the following subsequence in the terminal configuration:
$$\sum_{i \in [n]\setminus \{w_1\}}(k-1)k^{n-i},\ \sum_{i \in [n]\setminus \{w_2\}}(k-1)k^{n-i},\ \dots,\ \sum_{i \in [n]\setminus \{w_n\}}(k-1)k^{n-i}.$$
To see that this is a subsequence, we know from Proposition~\ref{prop:GeneralUnbundlingOperation} that $\sum_{i \in [n]\setminus \{w_1\}}k^{n-i}$ is in the subtree rooted at $v_{1}$ (leftmost child of root vertex.), chip $\sum_{i \in [n]\setminus \{w_2\}}k^{n-i}$ is in the subtree rooted at $v_{k1}$, chip $\sum_{i \in [n]\setminus \{w_3\}}k^{n-i}$ is in the subtree rooted at $v_{kk1}$, and so on. This sequence is in the same relative order as $w_1, w_2, \dots, w_n$.
\end{proof}

\section{Inversions in Resulting Configurations}
\label{sec:inversions}

In this section, we count the number of inversions in the stable configurations resulting from $F_w$ in general, not just the maximal case studied in \cite{MR4887467}.

\subsection{The number of inversions}

We now compute the number of inversions in the terminal configuration resulting from the firing strategy $F_w$ being applied to the $k$-ary tree. For this we will use the \textit{Lehmer code} of the permutation $w=w_1w_2\dots w_n$: $$c_w = ((c_{w})_1, (c_{w})_2, \dots, (c_{w})_n),$$
where $(c_{w})_i$ is the number of terms in $w$ that are right of $w_i$ that are less than $w_i$. It is well-known that mapping permutations in $S_n$ to their Lehmer codes is a bijective correspondence between permutations in $S_n$ and $\{0, 1, \dots, n-1\}\times \{0, 1, \dots, n-2\} \times \dots \times \{0\}$. From the definition of Lehmer code, we obtain the key observation that for any $i \in [n]$, the term $w_i$ is the $((c_w)_i+1)$th smallest term in $w_iw_{i+1}\dots w_n$. One property of the Lehmer code is that it preserves ordering, i.e., if $w$ is before $w'$ lexicographically, then $c_w$ is lexicographically before $c_{w'}$. To see this, one can observe that for $w < w'$, there is minimal $i_0$ such that $w_{i_0} < w_{i_{0}}'$ and consequently $(c_{w})_{i_0} < (c_{w'})_{i_0}$ while $(c_w)_{i} = (c_{w'})_{i}$ for all $i < i_0$.

\begin{example}\label{ex:LehmerCode}
For permutation 45312, the Lehmer code is 33200.
\end{example}

The Lehmer code is a good tool for expressing the number of inversions in the terminal configuration. We denote the number of inversions in $\mathcal{C}_{k, n, w}$, the configuration resulting from applying strategy $F_w$ on a $k$-ary tree with $k^n$ labeled chips starting at the root, as $I(k, n, w)$.

\begin{theorem}\label{thm:Lehmer}
Let $k \geq 2$, $n \in \mathbb{N}$, and $w$ be any permutation in $S_n$. Let $c_w$ be the Lehmer code of permutation $w$. Then the number of inversions in $\mathcal{C}_{k, n, w}$ is
\[I(k, n, w) = \binom{k}{2} \sum_{i=1}^{n}\binom{k^{(c_w)_{i}}}{2}k^{2n-i-2(c_w)_i-1} = \frac{k^n(k-1)}{4}\sum_{i=1}^n(k^{n-i} -k^{n-i-(c_w)_i}).\]
\end{theorem}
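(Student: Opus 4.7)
My plan is to count inversions by grouping them according to the smallest index $j \in [n]$ at which the two chips involved disagree in the digit position prescribed by $w_j$. Specifically, for chips $c$ and $c'$ with $k$-ary expansions $a_1 a_2 \cdots a_n$ and $a'_1 a'_2 \cdots a'_n$, define $j$ to be the least index with $a_{w_j} \neq a'_{w_j}$. By Proposition~\ref{prop:GeneralUnbundlingOperation}, $c$ lies to the left of $c'$ in the stable configuration precisely when $a_{w_j} < a'_{w_j}$, and for the pair to be an inversion we also need $c > c'$. Writing $m$ for the smallest index with $a_m \neq a'_m$, the inversion condition forces $a_m > a'_m$, so in particular $m \neq w_j$, hence $m < w_j$. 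Moreover $m \notin \{w_1, \ldots, w_{j-1}\}$ (the chips agree on these positions), so $m$ lies in
\[
L_j \;=\; \{w_i : i > j,\; w_i < w_j\},
\]
and $|L_j| = (c_w)_j$ by definition of the Lehmer code.

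For each fixed $j$ and $m \in L_j$, I would count the pairs $(c,c')$ with first $w$-disagreement index $j$ and first actual-disagreement position $m$ by a product over the $n$ digit positions. Positions $i < m$ must satisfy $a_i = a'_i$ (contributing $k^{m-1}$), position $m$ contributes $\binom{k}{2}$ from $a_m > a'_m$, positions in $\{w_1, \ldots, w_{j-1}\}$ exceeding $m$ must agree ($k$ each), position $w_j$ contributes $\binom{k}{2}$ from $a_{w_j} < a'_{w_j}$, and positions in $\{w_{j+1}, \ldots, w_n\} \setminus \{m\}$ exceeding $m$ are unrestricted ($k^2$ each). The crucial simplification is that if $m$ is the $r$-th smallest element of $L_j$, then $|\{w_{j+1}, \ldots, w_n\} \cap [m-1]| = r - 1$, because any $w_i$ with $i > j$ and $w_i < m < w_j$ automatically lies in $L_j$. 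This expresses every cardinality purely in terms of $j$, $m$, and $r$, and the exponents collapse so that the count for fixed $(j,m)$ becomes $\binom{k}{2}^2\, k^{2n - j - r - 2}$, depending on $m$ only through its rank $r$.

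Summing over $r = 1, 2, \ldots, (c_w)_j$ yields the geometric series $\binom{k}{2}^2 k^{2n - j - 2} \sum_{r=1}^{(c_w)_j} k^{-r}$, which after using $\binom{k}{2}^2/(k-1) = \binom{k}{2}\cdot k/2$ and $\binom{k^s}{2} = k^s(k^s - 1)/2$ rearranges to $\binom{k}{2} \binom{k^{(c_w)_j}}{2} k^{2n - j - 2(c_w)_j - 1}$. Summing over $j \in [n]$ gives the first expression in the theorem, and the second expression follows by expanding $\binom{k^{(c_w)_j}}{2}$ and factoring out $k^{n-1}$. The main obstacle is the bookkeeping of the five classes of digit positions and checking that the aggregate exponent simplifies as stated; the geometric collapse, made possible by reindexing the inner sum over the rank $r$ rather than over the value $m$, is what converts the initially $m$-dependent summands into a closed form that matches the Lehmer-code expression.
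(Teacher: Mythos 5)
Your argument is correct, and it reaches the formula by a genuinely different route from the paper. The paper isolates the inversions between chips that separate already at the root (computing, for each chip in a left subtree, how many smaller chips land in a right subtree), observes that what happens inside each child's subtree is the game for the relative order $w'$ of $w_2\cdots w_n$, and closes the argument with the recursion $I(k,n,w) = \binom{k}{2}\binom{k^{(c_w)_1}}{2}k^{2n-2w_1} + k\,I(k,n-1,w')$ plus induction on $n$ using $(c_{w'})_i=(c_w)_{i+1}$. You instead classify every inversion pair globally by the layer $j$ at which the two chips separate \emph{and} the position $m$ of their first numerical disagreement, and count each class in closed form; your key observation that the class size depends on $m$ only through its rank $r$ in $L_j$ (because everything in $\{w_{j+1},\dots,w_n\}\cap[m-1]$ automatically lies in $L_j$) turns the inner sum into a geometric series and eliminates the need for any induction. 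Your $j=1$ contribution $\sum_{r=1}^{w_1-1}\binom{k}{2}^2k^{2n-r-3}=\binom{k}{2}\binom{k^{w_1-1}}{2}k^{2n-2w_1}$ recovers exactly the paper's root cross-term, so in effect you have unrolled their recursion; what your version buys is a direct, self-contained double count (and a cleaner explanation of why the Lehmer code entry $(c_w)_j$ appears, namely as $|L_j|$), at the cost of heavier bookkeeping over the five classes of digit positions --- bookkeeping which, as far as I can check (the exponent $(m-1)+(j-1-s)+2(n-j-r)$ with $s=m-r$ does collapse to $2n-j-r-2$), you have carried out correctly.
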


\begin{proof}
Let $w = w_1w_2\dots w_n$. First, we count the inversions in $\mathcal{C}_{k, n, w}$ that result from two chips that end up in subtrees rooted at different children of the root vertex. Consider that when sorting chips by the $w_1$th most significant digit in the $k$-ary representation, we know that the $j$th child from the left contains the set of chips
$$S_j = \left\{k^{n-w_1}(j-1) + \sum_{m \in \{0,1, \dots, n-1\}\setminus \{n-w_1\}}d_m k^m: d_m \in \{0,1, \dots, k-1\} \right\}.$$
Therefore, for each $j', j \in [k]$ such that $j' > j$, we find that for each chip $x = k^{n-w_1}(j-1)+\sum_{m \in \{0, 1, \dots, n-1\}\setminus \{n-w_1\}}d_mk^m$ in the $j$th child of the root, the set of chips in the $j'$th child of the root that are less than $x$ is
$$\left\{k^{n-w_1}(j-1) + \sum_{m \in \{0,1, \dots, n-1\}\setminus \{n-a_1\} }d_m'k^m: \sum_{m = n-w_1+1}^{n-1}d_m'k^{m} < \sum_{m = n-w_1+1}^{n-1} d_mk^{m} \right\}.$$ The cardinality of this set is $k^{n-w_1}\sum_{m=0}^{w_1-2}d_{m+n-w_1+1}k^m $. Therefore, the number of inversions in $C_{k, n, w}$ that result from a chip sent to the $j'$th child of the root and another sent to the $j$th child of the root is
$$\binom{k^{c_i}}{2}k^{2n-2w_1}.$$
Therefore, the total number of inversions that result from two chips ending up in the subtrees rooted at different children of the root is
$$\sum_{j=1}^{k-1}(k-i)\binom{k^{w_1-1}}{2} (k^{n-w_1})^2.$$
Since $(c_w)_1 = w_1-1$ by definition of Lehmer code of $w$, we find that this quantity is equal to
$$\sum_{j=1}^{k-1}(k-i)\binom{k^{(c_w)_1} }{2} (k^{n-(c_w)_1-1})^2.$$

Note that after the root cannot fire, we find that there are $k^{n-1}$ chips on each root's child. When ignoring the $w_1$th most significant digit, taking relative order $w'$ of $w_2w_3\dots w_n$, the firing of the $k^{n-1}$ chips on each child of the root will yield the same number of inversions as firing $k^{n-1}$ using strategy $w'$. The total number of inversions obtained from two chips that are in the same subtree of the root is $I(k, n-1, w')$. Thus,
\begin{equation}\label{eq:RecursiveRelation}
I(k, n, w) = \sum_{i=1}^{k-1}(k-i)\binom{k^{(c_w)_1} }{ 2} (k^{n-(c_w)_1-1})^2 + k I(k, n-1, w').
\end{equation}

We now prove that for fixed $k \geq 2$, for all $n \in \mathbb{N}$ and $w \in S_n$, we have
\[I(k, n, w) = \binom{k}{2} \sum_{i=1}^{n}\binom{k^{(c_w)_{i}} }{ 2}k^{2n-i-2(c_w)_i-1}\]
via induction on $n$. For our base case, $n=0$, we know that $\mathcal{C}_{k, 0, w}$ only consists of one chip. Hence $I(k,0, w) = 0$. Now consider the inductive step. 

We shall show that for any $k \geq 2$ and $w \in S_{n_0+1}$, we have
\[I(k, n_0+1, w) = \binom{k}{2} \sum_{i=1}^{n_0+1} \binom{k^{(c_w)_i}}{2}k^{2n_0+2-i-2(c_w)_i-1}\]
assuming that for any $w' \in S_{n_0}$ we have $$I(k, n_0, w') = \binom{k}{2} \sum_{i=1}^{n_0}\binom{k^{(c_{w'})_{i}} }{ 2}k^{2n-i-2(c_{w'})_i-1}.$$
By Eq.~(\ref{eq:RecursiveRelation}), we have $I(k, n+1, w) = \binom{k}{2}\binom{k^{(c_w)_1}}{2}k^{2n+2-c_1-1} + k I(k, n_0, w')$, where $w' \in S_{n_0} $ is the result of taking the relative order of $w_2w_3\dots w_{n_0+1}$. Since $(c_{w'})_{j} = (c_{w})_{j+1}$ for each $j \in [n_0]$, the inductive hypothesis tells us that \begin{equation*}
    \begin{split}
        I(k, n+1, w) &= \binom{k}{2}\binom{k^{(c_w)_1}}{2}k^{2n+2-(c_w)_1-1} + k I(k, n_0, w') \\ &= \binom{k}{2}\binom{k^{c_1}}{2}k^{2n+2-c_1-1} +k \binom{k}{2}\sum_{i=1}^{n_0} \binom{k^{(c_{w})_{i+1}}}{2} k^{2n_0 - i-2(c_w)_{i+1}-1} \\ &= \binom{k}{2}\sum_{i=1}^{n_0+1}\binom{k^{(c_w)_i}}{2}k^{2n_0+1-i-2(c_w)_{i}}.
    \end{split}
\end{equation*} This proves the inductive hypothesis and hence completes the proof of the first formula.
For the second formula, we have
\begin{equation*}
\begin{split}
I(k, n, w) &= \binom{k}{2}\sum_{i=1}^n\binom{k^{(c_w)_i}}{2}k^{2n-i-2(c_w)_i-1} = \frac{k(k-1)}{4}\sum_{i=1}^n (k^{2(c_w)_i}-k^{(c_w)_i})k^{2n-i-2(c_w)_i-1} \\
&= \frac{k-1}{4}\sum_{i=1}^n (k^{2n-i}-k^{2n-i-(c_w)_i}) = \frac{k^n(k-1)}{4}\sum_{i=1}^n(k^{n-i} -k^{n-i-(c_w)_i}).
\end{split}
\end{equation*}\end{proof}

\begin{example}
For an identity permutation, the Lehmer code consists of all zeros. Thus, the corresponding number of inversions is 0.
    Suppose $w=132$ with Lehmer code 010. The corresponding number of inversions is $\binom{k}{2} k \binom{k}{2}$. If $k=2$, we obtain that the corresponding stable configuration has 2 inversions.
\end{example}

\begin{example}
    We list all permutations in $w \in S_3$ along with corresponding values of $I(2, 3, w)$, the number of inversions in $\mathcal{C}_{2, 3, w}$, in Table~\ref{tab:DecsentsInversions}.
\end{example}

\begin{example}\label{ex:Radixk}
A radix-$k$ digit reversal permutation $R_k'(n)$ is a specific permutation of $k^n$ numbers from $0$ to $k^n-1$ that we now describe. We represent each number from $0$ to $k^n -1$ in base $k$ and prepend it with zeros, so each number becomes a string of length $n$. Then we reverse each string and interpret it as an integer. The resulting sequence is $R_k'(n)$. For example, $R_2'(3)$ is $04261537$, which in binary is $000$, $100$, $010$, $110$, $001$, $101$, $011$, $111$. 

One can observe that, for $w \in S_n$ defined by $w_i= n+1-i$, the permutation induced by configuration $\mathcal{C}_{k, n, w}$ is the same as $R_k'(n)$. This permutation has $I(k, n, w) = \frac{k^{2n}-nk^{n+1} + (n-1)k^n}{4}
$ inversions. Theorem 6.1 of \cite{MR4887467} states that $\mathcal{C}_{k, n, w}$ has the maximal number of inversions.
\end{example}

\begin{example}
    For $n=2$, there are two permutations giving two possible numbers of inversions: 0 for permutation 12, and, for permutation 21, it is 
    \[\frac{k^{4}-2k^{3+1}+k^2}{4} = \frac{k^{2}(k-1)^2}{4}.\]
\end{example}

We will show later that number of inversions $I(k,n,w)$ is always divisible by $\frac{k^{2}(k-1)^2}{4}$.

\subsection{Properties of the number of inversions}

The smallest number of inversions for given $k$ and $n$ corresponds to the identity permutation, which is lexicographically the earliest. The largest number of inversions corresponds to the permutation, which is the reversal of identity and is lexicographically the latest. One might wonder whether lexicographically earlier permutations imply fewer inversions. This is not the case. For example, for $k=2$, the earlier permutation 2413 generates 28 inversions in the terminal configuration, while the later permutation 3124 generates only 24 inversions. However, there is a partial order that is preserved by the map $I(k, n, \cdot): S_n \to \mathbb{N}$ for fixed $k$ and $n$. 

For $n$ entry vectors $\vec{x}$ and $\vec{y}$, we define an entry-wise order $\preceqdot$ so that $\vec{x} \preceqdot \vec{y}$ if and only if $x_i \leq y_i$ for all $i \in [n]$. We use this order for Lehmer codes. When $\vec{x} \preceqdot \vec{y}$, we say that $\vec{x}$ is \textit{dominated by} $\vec{y}$. Similarly, we say that the permutation $w$ \textit{is dominated by} $w'$, denoted by $w \preceq w'$, if and only if $c_w \preceqdot c_{w'}$.

We show that the map $I(k, n, \cdot): S_n \to \mathbb{N}$ respects the domination order.

\begin{theorem}
\label{thm:inversionorder}
Let $k \geq 2$ and $n \in \mathbb{N}$. If $w, w' \in S_n$ and $w \preceq w'$, then the number $I(k, n, w)$ of inversions in the stable configuration $\mathcal{C}_{k, n, w}$ corresponding to $w$ is less than or equal to the number $I(k, n, w')$ of inversions in the stable configuration $\mathcal{C}_{k, n, w'}$ corresponding to $w'$.
\end{theorem}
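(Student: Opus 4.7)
The plan is to read the inequality directly off the closed-form expression for $I(k,n,w)$ given by Theorem~\ref{thm:Lehmer}, rather than doing anything recursive or combinatorial. Specifically, I would use the second formula
\[
I(k, n, w) = \frac{k^n(k-1)}{4}\sum_{i=1}^n\bigl(k^{n-i} - k^{n-i-(c_w)_i}\bigr),
\]
together with the definition $w \preceq w'$ meaning $(c_w)_i \leq (c_{w'})_i$ for every $i \in [n]$.

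First I would fix any $i \in [n]$ and consider the single term $T_i(c) := k^{n-i} - k^{n-i-c}$ as a function of $c \in \{0, 1, \dots, n-i\}$. Since $k \geq 2$, the map $c \mapsto k^{n-i-c}$ is strictly decreasing in $c$, so $T_i$ is a nondecreasing (in fact strictly increasing on its allowable range) function of $c$. In particular, $(c_w)_i \leq (c_{w'})_i$ gives $T_i((c_w)_i) \leq T_i((c_{w'})_i)$. Note this uses only that both Lehmer codes lie in the standard range $(c_w)_i, (c_{w'})_i \in \{0, 1, \ldots, n-i\}$, so that the exponent $n-i-(c_w)_i$ is always nonnegative and the formula is well-defined.

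Then I would sum over $i$ and multiply by the nonnegative prefactor $\frac{k^n(k-1)}{4}$ to obtain
\[
I(k, n, w) = \frac{k^n(k-1)}{4}\sum_{i=1}^n T_i((c_w)_i) \leq \frac{k^n(k-1)}{4}\sum_{i=1}^n T_i((c_{w'})_i) = I(k, n, w'),
\]
which is the desired inequality.

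There is no real obstacle: once the closed formula from Theorem~\ref{thm:Lehmer} is in hand, the whole statement reduces to term-by-term monotonicity of a single-variable exponential. The only thing worth remarking on in the write-up is that the domination order $\preceqdot$ on Lehmer codes is exactly the right structure for this argument, since the formula decomposes as an index-wise sum; no interaction between the coordinates $(c_w)_i$ needs to be analyzed. If desired, one could additionally note that the inequality is strict whenever $w \neq w'$ and $w \preceq w'$, since at least one $T_i$ is evaluated at strictly larger arguments, but this sharper statement is not required by the theorem.
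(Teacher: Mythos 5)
Your proposal is correct and follows essentially the same route as the paper's own proof: both apply the second closed-form expression from Theorem~\ref{thm:Lehmer} and observe that each summand $k^{n-i}-k^{n-i-(c_w)_i}$ is nondecreasing in $(c_w)_i$, so the coordinatewise domination of Lehmer codes transfers directly to the inequality on inversion counts. Your write-up is in fact slightly more careful than the paper's, since you note explicitly that the exponents stay nonnegative and that the inequality is strict when $w \neq w'$.
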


\begin{proof}
By definition of $\preceq$ we find that $w \preceq w'$ implies $c_{w} \preceqdot c_{w'}$, which implies $(c_{w})_i \leq (c_{w'})_i$.

From Theorem~\ref{thm:Lehmer}, we know that for any $k, n \in \mathbb{N}$ and $w \in S_n$,
\begin{equation*}
I(k, n, w) = \frac{k^n(k-1)}{4}\sum_{i=1}^n(k^{n-i} -k^{n-i-(c_w)_i}).
\end{equation*}
If $(c_{w})_i \leq (c_{w'})_i$, then $-k^{n-i-(c_w)_i} \leq -k^{n-i-(c_w')_i}$. Thus, if $w \preceq w'$, then we obtain $I(k, w, n) \leq I(k, w', n)$.\end{proof}

\begin{corollary}
\label{cor:CorollaryNumInveresions}
    The number of inversions $I(k,n,w)$ is an integer multiple of $\frac{(k-1)^2k^n}{4}$.
\end{corollary}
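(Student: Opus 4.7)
The plan is to read the divisibility directly off the closed form in Theorem~\ref{thm:Lehmer}. Recall that it gives
\[
I(k,n,w) \;=\; \frac{k^n(k-1)}{4}\sum_{i=1}^n \bigl(k^{n-i} - k^{n-i-(c_w)_i}\bigr).
\]
The prefactor already carries one copy of $(k-1)$, so it suffices to extract a further factor of $(k-1)$ from the sum.

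The key step is to factor each summand as
\[
k^{n-i} - k^{n-i-(c_w)_i} \;=\; k^{\,n-i-(c_w)_i}\bigl(k^{(c_w)_i} - 1\bigr),
\]
which is valid because $(c_w)_i \in \{0,1,\dots,n-i\}$, so $n-i-(c_w)_i \geq 0$. Using the standard identity $k^{m}-1 = (k-1)(k^{m-1}+k^{m-2}+\cdots+1)$ for any integer $m\geq 0$ (interpreting the empty sum as $0$ when $m=0$), each factor $k^{(c_w)_i}-1$ is an integer multiple of $k-1$. Hence every term of the sum is divisible by $k-1$, and so is the entire sum. Substituting back yields
\[
I(k,n,w) \;=\; \frac{(k-1)^2 k^n}{4} \cdot M,
\]
where $M \in \mathbb{Z}$ is the integer obtained after pulling out $(k-1)$.

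The only point to verify is that $M$ is genuinely an integer rather than a fraction with denominator $4$. This, however, is automatic: $I(k,n,w)$ is a nonnegative integer because it counts inversions of a permutation, and the factor $\frac{(k-1)^2 k^n}{4}$ is itself a positive rational with numerator divisible by whatever power of $2$ is needed (when $k$ is odd, $(k-1)^2$ is a multiple of $4$; when $k$ is even and $n\geq 2$, $k^n$ is a multiple of $4$; the remaining edge cases $n\in\{0,1\}$ give $I(k,n,w)=0$ trivially). So once we know the ratio is a rational number of the stated form and that $I(k,n,w)$ is an integer, $M$ must be an integer, finishing the proof. I do not foresee a real obstacle: the whole argument is a one-line divisibility observation applied to the formula supplied by Theorem~\ref{thm:Lehmer}.
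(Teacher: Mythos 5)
Your argument is correct and is essentially the paper's own proof: pull out the prefactor $\frac{k^n(k-1)}{4}$ from Theorem~\ref{thm:Lehmer} and observe that each summand $k^{n-i}-k^{n-i-(c_w)_i}=k^{n-i-(c_w)_i}(k^{(c_w)_i}-1)$ is divisible by $k-1$. Your final paragraph is unnecessary (and its logic is shaky on its own), since the statement only asserts that $I(k,n,w)=M\cdot\frac{(k-1)^2k^n}{4}$ for an integer $M$, and your $M$ is an integer by construction; nothing about the integrality of $\frac{(k-1)^2k^n}{4}$ needs to be checked.
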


\begin{proof}
    We have a coefficient $\frac{k^n(k-1)}{4}$ in front of the summation. In addition, each term $k^{n-i} -k^{n-i-(c_w)_i}$ under the summation is divisible by $k-1$. The corollary follows.
\end{proof}
For $k = 2$, the possible numbers of inversions are especially straightforward to describe.

\begin{proposition}\label{prop:k2caseInversionsArithmeticProgression}
    Fix $n \in \mathbb{N}$, the set of all possible values of $I(2, n, w)$ is $$\{I(2, n, w): w \in S_n\} = \{2^{n-2} m: m \in \{0, 1,2, \dots, 2^n-1-n\}\}.$$ In particular, the set of all possible values of $I(2, n, w)$ is an arithmetic progression of step size $2^{n-2}$.
\end{proposition}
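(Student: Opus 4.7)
Setting $k = 2$ in the second formula of Theorem~\ref{thm:Lehmer} yields
\[
I(2, n, w) = 2^{n-2} \cdot s(w), \qquad s(w) := \sum_{i=1}^{n-1}\bigl(2^{n-i} - 2^{n-i-(c_w)_i}\bigr),
\]
where I have dropped the $i = n$ term since $(c_w)_n = 0$. Each summand is a nonnegative integer, and maximizing it by taking $(c_w)_i = n-i$ for each $i$ (which corresponds to $w$ being the reverse of the identity) gives $s(w) = \sum_{i=1}^{n-1}(2^{n-i} - 1) = 2^n - n - 1$. Hence $I(2, n, w) \in \{2^{n-2} m : 0 \le m \le 2^n - 1 - n\}$, which is half of the proposition. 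The remaining task is to show the reverse inclusion: every integer $m \in \{0, 1, \dots, 2^n - 1 - n\}$ is of the form $s(w)$ for some $w \in S_n$. The ``arithmetic progression'' clause then follows immediately.

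The plan is to induct on $n$, with the base $n = 1$ trivial. For the inductive step I will exploit the standard bijection between $S_n$ and $\{0, 1, \dots, n-1\} \times S_{n-1}$ that sends $w$ to $((c_w)_1, w')$, where $w' \in S_{n-1}$ is the permutation whose Lehmer code is $((c_w)_2, \dots, (c_w)_n)$. Splitting off the $i = 1$ term from the sum defining $s(w)$ gives
\[
s(w) = \bigl(2^{n-1} - 2^{n-1-(c_w)_1}\bigr) + s(w').
\]
Writing $A_n := \{s(w) : w \in S_n\}$ and $T := \{2^{n-1} - 2^\ell : 0 \le \ell \le n-1\}$, this yields the Minkowski-sum identity $A_n = T + A_{n-1}$. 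Sorted, the set $T$ equals $\{0,\ 2^{n-2},\ 2^{n-2} + 2^{n-3},\ \dots,\ 2^{n-1} - 1\}$, with consecutive differences $2^{n-2}, 2^{n-3}, \dots, 2, 1$.

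Assuming inductively that $A_{n-1} = \{0, 1, \dots, 2^{n-1} - n\}$, the sum $A_n = T + A_{n-1}$ is the union of the intervals $[t,\ t + 2^{n-1} - n]$ for $t \in T$. The leftmost starts at $0$ and the rightmost ends at $2^n - n - 1$, so to conclude $A_n = \{0, 1, \dots, 2^n - n - 1\}$ it suffices to check that each pair of consecutive intervals overlaps. Since the largest gap in $T$ is $2^{n-2}$, I need $2^{n-2} \le (2^{n-1} - n) + 1$, which rearranges to $n - 1 \le 2^{n-2}$; this holds for all $n \ge 2$ by a trivial induction. The main technical point of the argument is precisely this covering inequality; once the Minkowski-sum decomposition is in place, everything else is mechanical.
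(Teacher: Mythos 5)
Your proposal is correct, and the forward inclusion (divisibility by $2^{n-2}$ plus the maximum value $2^{n-2}(2^n-1-n)$) matches the paper's. For the reverse inclusion, however, you take a genuinely different route. The paper argues directly: it rewrites the target sum as $2^n-1-\sum_{i=1}^n 2^{n-i-(c_w)_i}$, reduces to showing every $d$ with $n\le d\le 2^n-1$ is a sum of $n$ powers of $2$, and produces such a sum by starting from the binary representation of $d$ and repeatedly splitting a power $2^e$ into $2^{e-1}+2^{e-1}$. That argument is short but leaves implicit the step of matching the resulting multiset of powers to positions $i$ subject to the Lehmer-code constraint $0\le (c_w)_i\le n-i$ (not every multiset of $n$ powers of $2$ summing to $d$ is realizable --- e.g.\ $\{4,4,2,2\}$ for $n=4$ --- so the splitting must be done with some care). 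Your induction via the Minkowski-sum identity $A_n = T + A_{n-1}$ with $T=\{2^{n-1}-2^\ell : 0\le\ell\le n-1\}$ bakes the constraint into the recursion from the start, and reduces everything to the single covering inequality $2^{n-2}\le 2^{n-1}-n+1$, i.e.\ $n-1\le 2^{n-2}$. This is arguably the cleaner and more self-contained argument; the paper's is more explicitly constructive, exhibiting a concrete Lehmer code for each target value. Both are valid; no gaps in yours.
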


\begin{proof}
We first observe that Corollary~\ref{cor:CorollaryNumInveresions} applied to the case of $k=2$ implies that $I(2, n, w) \in \{2^{n-2}m: m \in \mathbb{N}\}$. We also know from Example~\ref{ex:Radixk} that the largest possible number of inversions is $2^{n-2}(2^n-1-n)$. Therefore, we find that $\{I(2, n, w): w \in S_n\} \subseteq \{2^{n-2} m: m \in \{0, 1,2, \dots, 2^n-1-n\}\}$.

Now we need to show that any number in the range $[2^n-1-n]$ can appear in the expression $\sum_{i=1}^n (2^{n-i} - 2^{n-i-(c_w)_i})$.
We see that $\sum_{i=1}^n (2^{n-i} - 2^{n-i-(c_w)_i}) = 2^n-1 - \sum_{i=1}^n 2^{n-i-(c_w)_i}$. Thus, it is enough to show that we can get any number between $n$ and $2^n-1$ in the expression $\sum_{i=1}^n 2^{n-i-(c_w)_i}$.

Consider the number $d$ in the range from $n$ to $2^n-1$. We can represent $d$ in binary as a sum of distinct powers of 2. Unless $d=2^n-1$, the number of such powers is less than $n$. Now, we can pick one power and split it into two halves. As $d > n$, we can continue this process until we get $n$ powers of 2. This will give us our representation.
\end{proof}

\subsection{The set of possible numbers of inversions resulting from \texorpdfstring{$F_w$}{}}

Now we discuss how the possible number of inversions is connected for different $n$.

\begin{proposition}\label{prop:kAResult}
If $A$ is a possible number of inversions in a stable configuration in a $k$-ary tree starting with $k^n$ chips and using permutation-based strategy $F_w$ for some $w \in S_n$, then there exists a firing strategy and a stable configuration in a $k$-ary tree starting with $k^{n+1}$ chips with $kA$ inversions.
\end{proposition}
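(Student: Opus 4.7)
The plan is to exhibit an explicit permutation $w' \in S_{n+1}$ whose permutation-based strategy $F_{w'}$ produces a stable configuration with exactly $kA$ inversions. Given $w \in S_n$ with $I(k, n, w) = A$, I propose
\[w' = 1,\ w_1+1,\ w_2+1,\ \dots,\ w_n+1 \in S_{n+1}.\]

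First I would argue structurally. Because $w'_1 = 1$, the initial firing at the root splits all $k^{n+1}$ chips into $k$ groups of size $k^n$ according to their leading $k$-ary digit, with the group whose leading digit is $j-1$ landing at the $j$th child of the root. Every subsequent layer $i+1$ fires using digit $w_i+1$, which refers only to the lower $n$ digits; so by Proposition~\ref{prop:GeneralUnbundlingOperation}, the stable configuration inside the subtree rooted at the $j$th child of the root is a verbatim copy of $\mathcal{C}_{k, n, w}$ with every label shifted by $(j-1)k^n$. Since affine translation by a constant preserves pairwise order, each such subtree contributes $A$ inversions to the overall configuration. No inversion spans two distinct subtrees, since every chip in the $(j+1)$th subtree exceeds every chip in the $j$th. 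Summing yields $kA$ inversions in total.

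As a formal sanity check I would appeal to Theorem~\ref{thm:Lehmer}. The Lehmer code of $w'$ is $c_{w'} = (0,\ (c_w)_1,\ (c_w)_2,\ \dots,\ (c_w)_n)$: the first entry is $0$ because $w'_1 = 1$ is the smallest entry of $w'$, and for $i \geq 2$ the relative order of $w'_i = w_{i-1}+1$ against later terms of $w'$ matches that of $w_{i-1}$ against later terms of $w$. Substituting into the closed form, the $i=1$ summand vanishes, and reindexing $j = i-1$ leaves the sum $\sum_{j=1}^{n}\bigl(k^{n-j} - k^{n-j-(c_w)_j}\bigr)$. The outer coefficient $\frac{k^{n+1}(k-1)}{4}$ is exactly $k$ times the coefficient $\frac{k^n(k-1)}{4}$ appearing in the formula for $I(k, n, w)$, confirming $I(k, n+1, w') = k \cdot I(k, n, w) = kA$.

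The main obstacle is locating the correct construction: prepending $1$ (so the root fires first on the most significant digit) is what cleanly partitions the chips into $k$ globally ordered blocks with no cross-block inversions, whereas the naive alternative of appending $n+1$ turns out to scale the inversion count by $k^2$ rather than $k$. Once the construction is identified, both the structural argument and the Lehmer-code computation are one-line verifications built on Theorem~\ref{thm:Lehmer}.
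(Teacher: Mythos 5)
Your construction (prepend $1$ to an order-isomorphic copy of $w$, then observe that the root's firing splits the chips into $k$ globally ordered blocks, each an $A$-inversion copy of $\mathcal{C}_{k,n,w}$, with no cross-block inversions) is exactly the argument the paper gives. The additional Lehmer-code verification via Theorem~\ref{thm:Lehmer} is correct and a nice cross-check, but the core approach is the same.
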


\begin{proof}
    Suppose $A$ is the number of inversions in a terminal configuration achieved with strategy $F_w$, corresponding to permutation $w$. Consider a permutation $w'$ of $n+1$ elements that starts with 1, and the rest is order isomorphic to $w$. After firings at the root according to $F_{w'}$, all chips at the $i$th child of the root are smaller than the chips at the $j$th child as long as $i < j$. Thus, in the terminal configuration, there are no inversions between two chips that are descendants of different children of the root. On the other hand, among the chips that are on the subtree of a particular root's child, there are exactly $A$ inversions.
\end{proof}

\begin{example}
    For $k=3$ and $n=3$, the possible number of inversions is 0, 27, 81, 108, and 135. Multiplying by 3, we get 0, 81, 243, 324, and 405. At the same time, the possible number of inversions for $k=3$ and $n=4$ is 0, 81, 243, 324, 405, 729, 810, 972, 1053, 1134, 1215, 1296, 1377, 1458.
\end{example}

Let $A(k,n)$ be the increasing sequence of all possible values of $I(k, n, w)$ in stable configurations in a $k$-ary tree starting with $k^n$ chips.

\begin{proposition}
    Let $k \geq 2$. The largest element of $A(k, n)$ is smaller than the smallest element of $\{a \in \frac{A(k, n+1)}{k}: a \not\in A(k, n)\}$.
\end{proposition}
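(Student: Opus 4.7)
The plan is to establish the contrapositive-equivalent formulation: every $a \in \frac{A(k, n+1)}{k}$ with $a \leq \max A(k, n)$ actually lies in $A(k, n)$. I would split the argument into two regimes depending on whether $k = 2$ or $k \geq 3$.

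When $k = 2$, I would invoke Proposition~\ref{prop:k2caseInversionsArithmeticProgression} at two different indices. Applied at $n$, it identifies $A(2, n)$ with the set of all multiples of $2^{n-2}$ in $[0, \max A(2, n)]$; applied at $n+1$, it says every element of $A(2, n+1)$ is a multiple of $2^{n-1}$, so every element of $\frac{A(2, n+1)}{2}$ is a multiple of $2^{n-2}$. Any $a \in \frac{A(2, n+1)}{2}$ with $a \leq \max A(2, n)$ is therefore a multiple of $2^{n-2}$ inside $[0, \max A(2, n)]$ and so lies in $A(2, n)$, giving the claim in this case.

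When $k \geq 3$, I would argue via Lehmer codes. Write $a = \frac{I(k, n+1, w')}{k}$ for some $w' \in S_{n+1}$ with Lehmer code $(c_1, \ldots, c_{n+1})$, and set $g(m) := \frac{k^m - 1}{k-1}$ and $\tilde{f} := \sum_{i=1}^{n+1} k^{n+1-i-c_i}$. By Theorem~\ref{thm:Lehmer}, $a = \frac{k^n(k-1)}{4}(g(n+1) - \tilde{f})$ and $\max A(k, n) = \frac{k^n(k-1)}{4}(g(n) - n)$; since $g(n+1) - g(n) = k^n$, the hypothesis $a \leq \max A(k, n)$ is equivalent to $\tilde{f} \geq k^n + n$. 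If $c_1 = 0$, the first term of $\tilde{f}$ equals $k^n$, so reindexing $j = i - 1$ produces $a = I(k, n, w'')$, where $w''$ has Lehmer code $(c_2, \ldots, c_{n+1})$ in $S_n$ (valid since $c_{j+1} \leq n - j$), and thus $a \in A(k, n)$. If instead $c_1 \geq 1$, then every exponent in $\tilde{f}$ is at most $n-1$, so the maximum of $\tilde{f}$ under this constraint, attained at the multiset $(n-1, n-1, n-2, \ldots, 1, 0)$, equals $g(n) + k^{n-1}$.

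The final step is then the numerical inequality $g(n) + k^{n-1} < k^n + n$ for $k \geq 3$: multiplying through by $k - 1$ and simplifying reduces it to $k^{n-1}(k^2 - 3k + 1) + n(k-1) + 1 > 0$, which holds because $k^2 - 3k + 1 \geq 1$ for $k \geq 3$. This strictly contradicts $\tilde{f} \geq k^n + n$, so the sub-case $c_1 \geq 1$ with $a \leq \max A(k, n)$ cannot occur, completing the argument. The main obstacle is that this numerical inequality fails exactly at $k = 2$ (with equality at $n = 3$), which is precisely why the arithmetic-progression characterization in Proposition~\ref{prop:k2caseInversionsArithmeticProgression} is indispensable for the binary case rather than a direct Lehmer-code estimate.
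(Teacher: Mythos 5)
Your proof is correct, and for $k \geq 3$ it follows essentially the paper's route: identify $kA(k,n)$ with the set of values $I(k,n+1,w')$ arising from $(c_{w'})_1 = 0$, and show that every value arising from $(c_{w'})_1 \geq 1$ exceeds $\max A(k,n)$ by comparing extremal Lehmer codes. Indeed, the inequality you isolate, $\frac{k^n-1}{k-1} + k^{n-1} < k^n + n$, is exactly the paper's comparison of $\max\{I(k,n+1,u) : (c_u)_1 = 0\}$ with $\min\{I(k,n+1,u) : (c_u)_1 \neq 0\}$. The genuine difference is your separate treatment of $k=2$, and it is not optional: the paper simply asserts that the minimum over $(c_u)_1 \neq 0$ ``is larger than the previous value,'' but for $k=2$ this fails once $n \geq 3$ --- at $n=3$ both quantities equal $16$, and at $n=4$ the purported minimum is $64$ while the purported maximum is $88$. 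The proposition itself survives for $k=2$ only because those overlapping values of $I(2,n+1,\cdot)$ coincide with values already attained with $(c_u)_1=0$, i.e., already lying in $2A(2,n)$; that is precisely what your appeal to the arithmetic-progression description of $A(2,n)$ in Proposition~\ref{prop:k2caseInversionsArithmeticProgression} establishes, and your value-by-value formulation (any $a$ with a representation having $c_1\geq 1$ exceeds $\max A(k,n)$, so any $a\leq \max A(k,n)$ must admit a representation with $c_1=0$) is the correct way to handle the possible overlap. In short, your proof is not merely an alternative: it supplies a case analysis that the paper's argument needs and omits.
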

\begin{proof}
    We first remark that the set of terms in $k A(k, n)$ is equal to the set $\{I(k,n+1, w'): w' \in S_{n+1}, (c_w')_1 = 0.\}$. This is by repeating the argument from the proof of the Proposition~\ref{prop:kAResult}. This implies $\{a \in A(k, n+1): a \not\in k A(k, n)\} = \{I(k,n+1, w'): w' \in S_{n+1}, (c_{w'})_1 \neq 0.\}$.
    
    Thus to show that the largest element in $A(k,n)$ is smaller than the smallest element of $\{a \in A(k, n+1): a \not\in k A(k, n) \}$, it suffices to show that
    \[\max \{I(k, n+1, w'): w' \in S_{n+1}, (c_{w'})_1 = 0\} < \min \{I(k,n+1, w'): w' \in S_{n+1}, (c_{w'})_1 \neq 0\}.\]
    Consider a permutation $u \in S_{n+1}$ such that $(c_u)_1=0$. Then, the maximum number of inversions for the corresponding terminal configuration is 
    \[ \frac{k^{n+1} (k-1)}{4} (k^{n-1}+k^{n-2} + \dots + 1 - n) = \frac{k^{n+1}(k-1)}{4}\left(\frac{k^n-1}{k-1}-n\right).\]
    On the other hand, consider permutation $u \in S_{n+1}$ such that $(c_u)_1 \neq 0$. Then, the minimum number of inversions for the corresponding terminal configuration is 
    \[\frac{k^{n+1}(k-1)}{4} (k^{n} - k^{n-1}) = \frac{k^{n+1}(k-1)}{4}(k-1)k^{n-1},\]
    which is larger than the previous value, completing the proof.
\end{proof}

 Consider a sequence $A'(k,n) = \frac{4A(k,n)}{k^{n}(k-1)^2}$. We showed that this is an integer sequence in Corollary~\ref{cor:CorollaryNumInveresions}. We also showed that the sequence $A'(k,n)$ forms a prefix to the sequence $A'(k,n+1)$. It follows that there exists a limiting sequence $A'_\infty(k)$, such that $A'(k,n)$ is its prefix for any $n$.

\begin{example}
    Sequence $A'_\infty(3)$ (A381462 in \cite{oeis}) starts as 0, 1, 3, 4, 5, 9, 10, 12, 13, 14, 15, 16, 17, 18. Sequence $A'_{\infty}(4)$ (A381463 in \cite{oeis}) starts as 0, 1, 4, 5, 6, 16, 17, 20, 21, 22, 24, 25, 26, 27.
 \end{example}



\section{Counting Descents in Resulting Configurations}
\label{sec:descents}

In this section, we study the descent set of $\mathcal{C}_{k, n, w}$. The descents depend on the \textit{support} of the Lehmer code $c_w$, denoted by $\supp(c)$. The support is the set of elements $j \in [n]$ such that $c_j > 0$. Suppose $c_i = 0$. It means that in the corresponding permutation $w$, the element $w_i$ is smaller than all consecutive elements: $w_i < w_j$, for $j > i$. Such elements $w_i$ are called \textit{right-to-left minima} of the permutation $w$. Thus, the support is the set of indices of the permutation elements that are not right-to-left minima.

We show that a number belongs to the descent set of $\mathcal{C}_{k, n, w}$ if and only if in its $k$-ary presentation using $n$ digits, the position of the last non-zero digit belongs to the support of $c_w$.

\begin{theorem}\label{thm:DescentSets}
        Let $k \geq 2$ and $n \in \mathbb{N}$. Let $w \in S_n$. Then, the descent set of $\mathcal{C}_{k, n, w}$ is the set of all
    \begin{equation}\label{eq:DescentSet}
    \bigcup_{s \in \supp(c_w)}\left\{ \sum_{i=1}^{s}d_ik^{n-i}: d_s \in [k-1] \quad \mbox{ and } \quad \forall i \in [s-1], d_i \in \{0, 1, \dots, k-1\}\right\}.
            \end{equation}
\end{theorem}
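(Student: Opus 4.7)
The plan is to analyze each potential descent of $\mathcal{C}_{k,n,w}$ by comparing consecutive chip values in layer $n+1$, using the bijection from Proposition~\ref{prop:GeneralUnbundlingOperation} between chips on that layer and traversal strings $t \in \{1,\dots,k\}^n$ in left-to-right lexicographic order. If the chip at $v_t$ has $k$-ary expansion $a_1a_2\dots a_n$ satisfying $a_{w_i} = t_i - 1$, then controlling the difference between the chips at $v_t$ and $v_{t'}$ for a consecutive pair $t,t'$ becomes a direct calculation.

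First I would parametrize consecutive pairs $(t, t')$ by their \emph{split index} $s \in [n]$: the unique index for which $t_1\dots t_{s-1} = t'_1\dots t'_{s-1}$, $t'_s = t_s + 1$, $t_{s+1}=\dots=t_n=k$, and $t'_{s+1}=\dots=t'_n=1$. Writing chip values as $a = \sum_{j=1}^n a_j k^{n-j}$ and using $a_{w_i} = t_i - 1$ (so the corresponding digit differences are $0$ for $i<s$, equal to $-1$ for $i=s$, and equal to $k-1$ for $i>s$), I would obtain
\[a - a' = -k^{n-w_s} + (k-1)\sum_{i=s+1}^n k^{n-w_i}.\]

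The main step is characterizing when $a > a'$, and I expect the inequality analysis to be the main obstacle — the two terms pull in opposite directions, so one must bound them carefully. If $w_s$ is a right-to-left minimum of $w$, equivalently $(c_w)_s = 0$ and hence $s \notin \supp(c_w)$, then $n-w_s > n-w_i$ for every $i > s$, and the geometric-series identity gives
\[(k-1)\sum_{i=s+1}^n k^{n-w_i} \;\leq\; (k-1)\sum_{j=0}^{n-w_s-1} k^j \;=\; k^{n-w_s} - 1,\]
so $a - a' < 0$ and no descent occurs. Conversely, if $s \in \supp(c_w)$ then some $w_{i_0}$ with $i_0 > s$ satisfies $w_{i_0} < w_s$, hence $k^{n-w_{i_0}} \geq k\cdot k^{n-w_s}$, and the single term $(k-1)k^{n-w_{i_0}} \geq (k-1)k\cdot k^{n-w_s} > k^{n-w_s}$ already forces $a - a' > 0$, producing a descent.

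Finally I would translate the split index $s$ into the actual position of the descent. The 1-indexed lex rank of $t$ in $\{1,\dots,k\}^n$ is $1 + \sum_{i=1}^n (t_i - 1)k^{n-i}$. Substituting $t_i = k$ for $i > s$ and simplifying via $\sum_{i=s+1}^n (k-1)k^{n-i} = k^{n-s}-1$, this rank becomes $\sum_{i=1}^{s-1}(t_i-1)k^{n-i} + t_s\, k^{n-s}$. Setting $d_i = t_i-1$ for $i < s$ and $d_s = t_s$ presents the position in the form $\sum_{i=1}^s d_i k^{n-i}$ with $d_s \in [k-1]$ (since $t_s + 1 \leq k$ forces $t_s \leq k-1$) and $d_i \in \{0,1,\dots,k-1\}$ for $i < s$. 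Letting $s$ range over $\supp(c_w)$ and the $d_i$'s vary freely recovers exactly the set in Eq.~(\ref{eq:DescentSet}), completing the proof.
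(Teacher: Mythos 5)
Your proof is correct and follows essentially the same approach as the paper: comparing the largest chip of one subtree with the smallest chip of the adjacent subtree at the common ancestor on layer $s$, which is exactly your split-index computation $a - a' = -k^{n-w_s} + (k-1)\sum_{i=s+1}^n k^{n-w_i}$. Your write-up is in fact more explicit than the paper's (which omits the geometric-series inequality and the final rank computation), and correctly identifies the criterion as $w_s$ being a right-to-left minimum rather than the tail being fully increasing.
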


\begin{proof}
    Consider a vertex $v$ at the layer $n$ that corresponds to a descent, and let $c$ be the chip that ends in $v$. Vertex $v$ must be the right-most child of its parent. Let $v'$ be the next vertex with chip $c'$, and let $p$ be the closest common ancestor of $v$ and $v'$ on layer $j$. That means $v$ and $v'$ traverse through $p$, and firing at $p$ sends $v$ to child $i$ of $p$ and send $v'$ to child $i+1$. Chip $c$ is the largest chip that visits the $i$th child of $p$, and chip $c'$ is the smallest chip that visits the $(i+1)$st child.

    It follows that $c < c'$ if and only if the sequence of firings starting from layer $j$ down is in increasing order of the digit place that is used for firing. That means vertex $v$ does not correspond to a descent if and only if the permutation $w$ that describes the firing order has an increasing tail starting from place $j$. That means the Lehmer code value at place $j$ is zero. Thus, the traversal string $t = t_1t_2\dots t_n$ that leads to the vertex $v$ has digit $t_j$ less than $k$, and $t_i = k$, for $i>j$. Thus, the counting order of the vertex $v$ when written in $k$-ary ends in a non-zero digit at the $j$th place, followed by zeros.
\end{proof}

From the above theorem, we find the set of possible descents given $k \geq 2$ and $n \in \mathbb{N}$.
\begin{corollary}
    Let $k \geq 2, n \in \mathbb{N}$. Consider a $k$-ary tree with $k^n$ chips $0,1,2 \dots, k^n-1$. The set of possible descents is $\{k, 2k, \dots, k^n-k\}$. 
\end{corollary}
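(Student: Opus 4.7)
The plan is to deduce this as an immediate consequence of Theorem~\ref{thm:DescentSets} by reading off the ranges of exponents and digit values allowed in Eq.~(\ref{eq:DescentSet}). The key structural fact I will lean on is that the last entry of any Lehmer code is forced to be zero, i.e., $(c_w)_n = 0$, and therefore $\supp(c_w) \subseteq \{1, 2, \ldots, n-1\}$ for every $w \in S_n$. This single observation is what ultimately pins both the divisibility by $k$ and the upper bound $k^n - k$.

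For the forward inclusion, I will apply Eq.~(\ref{eq:DescentSet}) directly: every descent of $\mathcal{C}_{k, n, w}$ has the form $\sum_{i=1}^{s} d_i k^{n-i}$ for some $s \in \supp(c_w) \subseteq \{1, \ldots, n-1\}$, with $d_s \in [k-1]$ and each $d_i \in \{0, \ldots, k-1\}$ for $i < s$. The smallest power of $k$ appearing in the sum is $k^{n-s}$, and $n - s \geq 1$ gives $k \mid k^{n-s}$, so $k$ divides every descent. The maximum such sum, taken with $s = n-1$ and $d_i = k - 1$ for all $i$, is $\sum_{i=1}^{n-1}(k-1) k^{n-i} = k^n - k$, and the minimum positive value is $k$ (attained at $s = n-1$, $d_{n-1} = 1$, $d_i = 0$ for $i < n-1$). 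Hence every possible descent lies in $\{k, 2k, \ldots, k^n - k\}$.

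For the reverse inclusion, I would take any $m \in \{1, \ldots, k^{n-1} - 1\}$ and write $mk$ in base $k$ as $mk = \sum_{i=1}^{n} d_i k^{n-i}$ with $d_i \in \{0, \ldots, k-1\}$. Since $k \mid mk$, the digit $d_n$ equals zero, so the largest index $s$ with $d_s > 0$ satisfies $s \in \{1, \ldots, n-1\}$ and automatically $d_s \in [k-1]$. It then remains to exhibit a permutation $w \in S_n$ with $s \in \supp(c_w)$: the Lehmer code that is $1$ in slot $s$ and $0$ elsewhere qualifies, since $1 \leq n - s$, and the Lehmer code bijection guarantees a corresponding $w \in S_n$. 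For this $w$, the expansion of $mk$ realizes it as a member of the set in Eq.~(\ref{eq:DescentSet}), so $mk$ is indeed a descent of $\mathcal{C}_{k, n, w}$. There is no genuine obstacle beyond careful bookkeeping: the corollary simply records which integers can appear as the final nonzero $k$-ary digit position in Eq.~(\ref{eq:DescentSet}), and both inclusions follow from elementary base-$k$ arithmetic together with $(c_w)_n = 0$.
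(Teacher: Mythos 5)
Your proof is correct. The forward inclusion is essentially the paper's own (one-sentence) argument: since the last entry of any Lehmer code is forced to be $0$, we have $n \notin \supp(c_w)$, so every element of the set in Eq.~(\ref{eq:DescentSet}) is a sum of terms $d_i k^{n-i}$ with $i \leq n-1$, hence a positive multiple of $k$ bounded above by $\sum_{i=1}^{n-1}(k-1)k^{n-i} = k^n - k$. Where you go beyond the paper is the reverse inclusion: the paper's proof stops at ``every descent ends in a zero digit'' and never verifies that each element of $\{k, 2k, \dots, k^n-k\}$ is actually realized as a descent for some $w \in S_n$, which is needed for the stated equality of sets. Your argument for that direction is sound: given $mk$ with last nonzero base-$k$ digit in position $s \leq n-1$, the tuple with a $1$ in slot $s$ and $0$ elsewhere is a valid Lehmer code precisely because slot $s$ admits values up to $n-s \geq 1$, so the bijection yields a $w$ with $\supp(c_w) = \{s\}$, and Theorem~\ref{thm:DescentSets} then places $mk$ in the descent set of $\mathcal{C}_{k,n,w}$. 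So your write-up is not a different method but a completion of the paper's sketch; the only thing it buys beyond the original is that it actually establishes both inclusions.
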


\begin{proof}
    As $n \not\in \supp(c_w)$, every descent represented in $k$-ary ends in zero by Theorem~\ref{thm:DescentSets}.
\end{proof}

Also, from Theorem~\ref{thm:DescentSets}, we can derive the number of descents in stable configurations. The number of descents is a number that can be written in base $k$ using only digits 0 and $k-1$ and having not more than $n$ digits. Let $D(k, n, w)$ denote the number of descents in the stable configuration of chips $0, 1, \dots, k^n-1$ resulting from applying $F_w.$

\begin{theorem}\label{thm:DescentNumbers}
For permutation $w$ with Lehmer code $c_w$, the number of descents in $\mathcal{C}_{k, n, w}$ is
\[D(k, n, w) = \sum_{j \in \supp(c_w)} (k-1) k^{j-1}.\]
\end{theorem}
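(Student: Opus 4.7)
The plan is a direct enumeration argument based on Theorem~\ref{thm:DescentSets}. That theorem already identifies the descent set as a union
\[D := \bigcup_{s \in \supp(c_w)} S_s, \qquad S_s := \left\{\sum_{i=1}^{s} d_i k^{n-i} : d_s \in [k-1],\ d_i \in \{0,1,\dots,k-1\} \text{ for } i < s\right\},\]
so to count $D$ it suffices to (i) count each $S_s$, and (ii) show the $S_s$ are pairwise disjoint.

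First, I would count $|S_s|$ for a fixed $s \in \supp(c_w)$. The map
\[(d_1,\dots,d_s) \mapsto \sum_{i=1}^{s} d_i k^{n-i}\]
is injective: distinct $k$-ary digit tuples (even with restricted range on $d_s$) yield distinct integers by uniqueness of $k$-ary representations. Hence $|S_s| = (k-1) \cdot k^{s-1}$, with $k-1$ choices for $d_s$ and $k$ choices for each of $d_1,\dots,d_{s-1}$.

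Next, I would establish pairwise disjointness. An element of $S_s$, written in $k$-ary with exactly $n$ digits (padded with leading zeros if necessary), has the shape $d_1 d_2 \cdots d_{s-1} d_s \underbrace{0 \cdots 0}_{n-s}$ with $d_s \neq 0$. Thus every element of $S_s$ has its final non-zero digit in position $s$ (counted from the most significant digit). Consequently, for distinct $s, s' \in \supp(c_w)$, the sets $S_s$ and $S_{s'}$ contain integers whose last non-zero $k$-ary digits occupy different positions, so $S_s \cap S_{s'} = \emptyset$.

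Combining these steps, I conclude
\[D(k,n,w) = |D| = \sum_{s \in \supp(c_w)} |S_s| = \sum_{s \in \supp(c_w)} (k-1) k^{s-1},\]
which is the desired formula. I do not expect a real obstacle here: the only content beyond Theorem~\ref{thm:DescentSets} is the observation that the ``last non-zero digit'' is a well-defined invariant that separates the pieces of the union, and this invariant immediately gives both the cardinality of each $S_s$ and the disjointness across different values of $s$.
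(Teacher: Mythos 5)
Your proposal is correct and follows essentially the same route as the paper: the paper's proof also invokes Theorem~\ref{thm:DescentSets}, observes that an $n$-digit $k$-ary string lies in the descent set exactly when its last non-zero digit sits in a position $j \in \supp(c_w)$, and counts $(k-1)k^{j-1}$ such strings for each $j$. You merely make explicit the injectivity and pairwise-disjointness steps that the paper leaves implicit in its ``if and only if'' phrasing.
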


\begin{proof}
    Let $d$ be a $k$-ary string of length $n$ that may start with zeros. From Theorem~\ref{thm:DescentSets}, we know that the integer represented by $d$ belongs to the descent set if and only if its last non-zero digit is in place $j$, where $j \in \supp(c_w)$. For each $j$, the number of such strings is $(k-1)k^{j-1}$. The theorem follows.
\end{proof}

\begin{example}
\label{ex:descents2}
For $k=2$, the number of descents in a terminal configuration can be any number between 0 and $2^{n-1}-1$ inclusive.
\end{example}

\begin{example}
    For $k=3$, the number of descents in a terminal configuration can be any number that can be written in base 3 using digits 0 and 2 and having not more than $n$ digits. Integers that can be written in base 3 using digits 0 and 1 are the Stanley sequence: the lexicographically earliest sequence that does not contain 3-term arithmetic progressions. The Stanley sequence is sequence A005836 in the OEIS (\cite{oeis}). Our numbers of descents are twice the numbers in the Stanley sequence.
\end{example}

\begin{example}
    Let $k=2$. We compute the set of descents and the number of inversions for terminal configurations for all permutations of length $3$ in Table~\ref{tab:DecsentsInversions}.
    \begin{table}[htbp]
    \centering
    \begin{tabular}{|c|c|c|c|c|c|c|}
    \hline
        $w$     & $c_w$    & $\rev(\supp(c_w))$ &$\mathcal{C}_{2, 3, w}$ & \# inversions & \# descents   & Descent Set    \\ \hline
        $123$   & $000$    & 000            & $(0,1,2,3,4,5,6,7)$    & 0             & 0    & $\emptyset$  \\ 
        $132$   & $010$    & 010   & $(0,2,1,3,4,6,5,7)$             & $2$           & $2$       & $\{2, 6\}$    \\ 
        $213$   & $100$    & $001$   & $(0,1,4,5,2,3,6,7)$            & $4$           & 1       & $\{4\}$      \\ 
        $231$   & $110$    & $011$  & $(0,4,1,5,2,6,3,7)$     & 6             & 3    & $\{2, 4, 6\}$\\
        $312$   & $200$    & $001$   & $(0,2,4,6,1,3,5,7)$                & 6              & 1    & $\{4\}$    \\ 
        $321$   & $210$    & $011$      & $(0,4,2,6,1,5,3,7)$  & 8            & 3        & $\{2, 4, 6\}$   \\
         \hline 
     \end{tabular}
    \caption{Descents and inversions of terminal configurations for $k=2$ and all permutations of length $3$}
    \label{tab:DecsentsInversions}
\end{table}
As we can see, the number of descents can be any number between 0 and 3, as was predicted in Example~\ref{ex:descents2}. We also see that descent sets are the same for permutations that have the same support of their Lehmer codes.
\end{example}

Suppose $k \geq 2$ and $n \in \mathbb{N}$ are fixed. One may wonder if for $w, w' \in S_n$, for configuration $\mathcal{C}_{k, n, w}$ having fewer descents than $\mathcal{C}_{k, n, w'}$ implies $\mathcal{C}_{k, n, w}$ has fewer inversions than does $\mathcal{C}_{k, n, w'}$. It turns out that this is not the case.
\begin{example}
    Consider $\mathcal{C}_{2, 4, 1324}$ and $\mathcal{C}_{2, 4, 3124}$. Since $1324$ has Lehmer code $0100$, Theorems~\ref{thm:Lehmer} and Theorem~\ref{thm:DescentNumbers} tell us that $\mathcal{C}_{2, 4, 1324}$ has $2$ descent and $8$ inversions. Since $3124$ has Lehmer code $2000$, Theorem~\ref{thm:Lehmer} and Theorem~\ref{thm:DescentNumbers} tell us that $\mathcal{C}_{2, 4, 3124}$ has $1$ descent and $24$ inversions. 
\end{example}

One may also ask whether, similar to inversions, a lexicographically earlier support of a Lehmer code corresponds to fewer descents. This is not the case.

\begin{example}\label{ex:Permutations}
Consider permutations $34125$ and $14523$ with respective Lehmer codes 22000 and 02200. The supports of these Lehmer codes are $(1,1, 0, 0, 0)$ and $(0, 1, 1, 0, 0)$, respectively. Thus, we find that $D(k,5, 34125)=(k-1)+(k-1)k$, whereas $D(k,5,13425) = (k-1)k + (k-1)k^2$, not matching the order.
\end{example}

However, more descents do correspond to a lexicographically later reversal of the support of the Lehmer code.

\begin{theorem}
Let $w$ and $w^\prime$ be in $S_n$. Let $c_w$ and $c_{w^\prime}$ be the Lehmer codes of $w, w'$ respectively. If the number of descents for $\mathcal{C}_{k,n,w}$ is greater than that of $\mathcal{C}_{k,n,{w^\prime}}$, then $\rev(\supp(c_w))$ must be lexicographically later than $\rev(\supp(c_{w^\prime}))$.
\end{theorem}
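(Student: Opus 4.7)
The plan is to show that $D(k,n,w)$ is, up to the factor $k-1$, exactly the integer whose base-$k$ representation (with most significant digit first) is $\rev(\supp(c_w))$. Since numerical order on integers with equal-length base-$k$ representations agrees with lexicographic order on their digit strings, the desired implication (in fact an iff) will follow immediately.

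Concretely, I would first identify each support with its indicator string. For $w \in S_n$, let $\sigma(w) = \sigma_1(w)\cdots \sigma_n(w) \in \{0,1\}^n$ with $\sigma_j(w) = 1$ iff $j \in \supp(c_w)$, so that $\rev(\supp(c_w))$ is exactly the string $\sigma_n(w)\sigma_{n-1}(w)\cdots \sigma_1(w)$. Theorem~\ref{thm:DescentNumbers} then reads
\[D(k,n,w) \;=\; (k-1)\sum_{j=1}^n \sigma_j(w)\, k^{j-1} \;=\; (k-1)\, M(w),\]
where $M(w) := \sum_{j=1}^n \sigma_j(w)\, k^{j-1}$. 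Because each $\sigma_j(w) \in \{0,1\} \subseteq \{0,1,\dots,k-1\}$ and $k \geq 2$, the string $\sigma_n(w)\sigma_{n-1}(w)\cdots\sigma_1(w) = \rev(\supp(c_w))$ is precisely the length-$n$ base-$k$ numeral for $M(w)$.

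The final step is to invoke the standard fact that, for two non-negative integers written in base $k$ with the same number of digits, the larger integer is exactly the one whose digit string is lexicographically larger when read from most to least significant (indeed, if the strings first disagree at a most-significant position where the larger has digit $a$ and the smaller has digit $b < a$, then the numerical difference is at least $k^{i-1} - (k^{i-1}-1) = 1 > 0$). Applying this to $M(w)$ and $M(w')$ yields
\[D(k,n,w) > D(k,n,w') \;\iff\; M(w) > M(w') \;\iff\; \rev(\supp(c_w)) >_{\mathrm{lex}} \rev(\supp(c_{w'})),\]
which is a biconditional strengthening of the stated theorem.

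The argument is essentially a change of encoding, so I do not anticipate any real obstacle; the only care required is to line up conventions so that the reversal operation in the theorem statement corresponds exactly to writing the base-$k$ expansion of $M(w)$ with its most significant digit first, which the calculation above makes explicit. A minor sanity check via the table in the preceding example (where, e.g., $\rev(\supp(c_{213})) = 001 <_{\mathrm{lex}} 011 = \rev(\supp(c_{231}))$ matches $D(2,3,213) = 1 < 3 = D(2,3,231)$) confirms the convention.
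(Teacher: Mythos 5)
Your proof is correct, and it takes a genuinely different (and in fact stronger) route than the paper's. The paper argues only via the \emph{largest} element of the support: it claims that $D(k,n,w) > D(k,n,w')$ forces the largest index $i_0$ with $(c_w)_{i_0}\neq 0$ to exceed the corresponding index for $w'$, and then reads off the lexicographic comparison of the reversed supports from that single position. (That intermediate claim is actually not justified as stated --- e.g.\ supports $\{1,2\}$ and $\{2\}$ have the same largest index yet different descent counts --- so the paper's argument only covers the case where the top support indices differ.) You instead observe that $D(k,n,w)/(k-1)$ is the integer whose length-$n$ base-$k$ numeral, most significant digit first, is exactly the indicator string $\rev(\supp(c_w))$, and then invoke the equivalence of numerical order and lexicographic order for equal-length base-$k$ strings. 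This buys you a complete argument with no case analysis, and moreover yields the biconditional $D(k,n,w) > D(k,n,w') \iff \rev(\supp(c_w)) >_{\mathrm{lex}} \rev(\supp(c_{w'}))$ rather than just the one-way implication in the statement; your sanity check against the $S_3$ table confirms the digit-ordering convention lines up. The only stylistic caveat is that the theorem phrases $\supp(c_w)$ as a set while your argument (like the paper's table) treats it as a $\{0,1\}$-string; you make that identification explicit, so there is no gap.
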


\begin{proof}
From Theorem~\ref{thm:DescentNumbers}, we have that for any $w$ the $i$th index of $\supp(c_w)$ when expressed as a binary string contributes $(k-1)k^{i-1}$ descents if and only if $(c_w)_i \neq 0$. Thus since the number of descents for $\mathcal{C}_{k,n,w} $ is greater than that of $\mathcal{C}_{k,n,{w^\prime}}$, we have that $i_0 \in [n]$, the largest index in $[n]$ such that $(c_w)_{i_0} \neq 0$ is bigger than $i_0'$, the largest index in $[n]$ such that $(c_{w'})_{i_0'} \neq 0$. Also, $i_0$ and $i_0'$ are respectively the largest indices at which $\supp(c_w)$ and $\supp(c_{w'})$ are nonzero. Therefore, $\rev(\supp(c_{w}))$ is lexicographically later than $\rev(\supp(c_{w'}))$.
\end{proof}

\begin{example}
    Consider $34125$ and $14523$ from Example~\ref{ex:Permutations} with respective Lehmer codes 22000 and 02200. The reversals of the supports of the Lehmer codes are $00011$ and $00110$. The reversals are in the same order as the number of descents.
\end{example}

\section{Permutations with Decreasing or Increasing Tails}
\label{sec:TailsExamples}

The smallest number of descents and inversions corresponds to permutations with the all-zeros Lehmer code, which is only the identity permutation. The largest number of descents corresponds to permutations with Lehmer code that contain only one zero at the end. These are permutations that end in~1. There are $(n-1)!$ of them.

In general, permutations with many zeros at the end of their Lehmer codes have few descents. Suppose a permutation has $r$ zeros at the end of its Lehmer code. That means the last $r$ permutation values are in increasing order.

\begin{example}
We define \textit{valley permutations} to be permutations of $[n]$ that monotonically decrease until reaching a global minimum and afterwards monotonically increase. The Lehmer code $c_w$ for a valley permutation $w \in S_n$ with global minima at index $i_0$ is $c_w = (w_1-1, w_2-1, \dots, w_{i_0-1}-1, 0, 0, \dots, 0)$. It follows that the number of descents in the stable configuration corresponding to valley permutations is a function of $i_0$. More specifically, for fixed $k \geq 2$ and for valley permutation $w \in S_n$ with global minima at index $i_0$ there are exactly $\sum_{i=1}^{i_0-1} (k-1)k^{i-1}$ descents by Theorem~\ref{thm:DescentNumbers}. The number of inversions can be expressed through the values of the permutation before the minimum: $I(k, n, w) = \frac{k^n(k-1)}{4}\sum_{i=1}^n (k^{n-i} - k^{n-i-(c_w)_i}) = \frac{k^n(k-1)}{4}\sum_{i=1}^{i_0-1}(k^{n-i} - k^{n-i-w_i+1})$. 
\end{example}

Now, we describe some bounds on the number of descents and inversions in the stable configuration depending on the permutation. We start with permutations with an increasing tail.

\begin{proposition}\label{prop:IncreasingTail}
    Given permutation $w$, if there exists an $i_0\in \mathbb{N}$ such that for all indices $i\geq i_0$, we have $w_i < w_{i+1}$, then $\mathcal{C}_{k, n, w}$ has at most $\sum_{j=1}^{i_0-1}(k-1)k^{j-1}$ descents and at most $\binom{k}{2} \sum_{i=1}^{i_0-1}k^{i-1}\binom{k^{n-i}}{2}$ inversions.
\end{proposition}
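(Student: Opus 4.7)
The plan is to translate the tail hypothesis on $w$ into a statement about the Lehmer code $c_w$, and then read off the two inequalities directly from Theorems~\ref{thm:DescentNumbers} and~\ref{thm:Lehmer}. The key observation is that if $w_{i_0} < w_{i_0+1} < \cdots < w_n$, then for every index $i \geq i_0$, every element $w_j$ with $j > i$ satisfies $w_j > w_i$, so no element to the right of $w_i$ is smaller than $w_i$. Hence $(c_w)_i = 0$ for all $i \geq i_0$, and therefore $\supp(c_w) \subseteq \{1, 2, \dots, i_0-1\}$.

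First I would plug this containment into Theorem~\ref{thm:DescentNumbers}. Since every summand $(k-1)k^{j-1}$ is nonnegative and $\supp(c_w) \subseteq \{1, \dots, i_0 - 1\}$, we get immediately
\[D(k, n, w) = \sum_{j \in \supp(c_w)} (k-1)k^{j-1} \leq \sum_{j=1}^{i_0-1}(k-1)k^{j-1},\]
which is the first claim.

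For the inversion count, I would use the formula from Theorem~\ref{thm:Lehmer}:
\[I(k, n, w) = \binom{k}{2}\sum_{i=1}^{n}\binom{k^{(c_w)_i}}{2}k^{2n-i-2(c_w)_i - 1}.\]
The terms with $i \geq i_0$ vanish because $(c_w)_i = 0$ forces $\binom{k^{(c_w)_i}}{2} = \binom{1}{2} = 0$. For $i < i_0$, the main step is to bound the $i$th summand by its maximum possible value as $(c_w)_i$ ranges over $\{0, 1, \dots, n-i\}$. Rewriting
\[\binom{k^{(c_w)_i}}{2}k^{2n-i-2(c_w)_i - 1} = \tfrac{1}{2}k^{2n-i-1}\bigl(1 - k^{-(c_w)_i}\bigr),\]
we see this is a strictly increasing function of $(c_w)_i$, hence maximized at $(c_w)_i = n - i$, giving the value $\binom{k^{n-i}}{2}k^{i-1}$. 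Summing over $i = 1, \dots, i_0 - 1$ yields the desired bound
\[I(k, n, w) \leq \binom{k}{2}\sum_{i=1}^{i_0-1} k^{i-1}\binom{k^{n-i}}{2}.\]

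The main substantive step is the monotonicity argument showing that the summand is maximized at $(c_w)_i = n-i$; once that is noted, both inequalities are immediate consequences of the structural fact $\supp(c_w) \subseteq \{1, \dots, i_0 - 1\}$. There is no serious obstacle beyond the rewriting step, as the Lehmer-code formulas do all the heavy lifting.
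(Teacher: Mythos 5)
Your proof is correct and follows essentially the same route as the paper: the tail hypothesis gives $(c_w)_i=0$ for $i\geq i_0$, the descent bound then falls out of Theorem~\ref{thm:DescentNumbers}, and the inversion bound comes from maximizing each summand of Theorem~\ref{thm:Lehmer} at $(c_w)_i=n-i$. The only cosmetic difference is that you work with the first form of the inversion formula and argue monotonicity in $(c_w)_i$, whereas the paper substitutes $(c_w)_i\leq n-i$ into the second form directly; these are the same bound.
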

\begin{proof}
    Because $w_i < w_{i+1}$ for all integers $i$ such that $i \geq i_0$, we find that $(c_{w})_i = 0$ for all $i \geq i_0$. Therefore, the support of $c_w$ is a subset of $[i_0-1]$. Thus, by Theorem~\ref{thm:DescentNumbers}, we have that there are at most $\sum_{j=1}^{i_0-1}(k-1)k^{j-1}$ descents in $\mathcal{C}_{k, n, w}$.

    Because $(c_{w})_i = 0$ for all $i \geq i_0$, we have
    $$I(k, n, w) = \binom{k}{2} \sum_{i=1}^{i_0-1}k^{i-1}\binom{k^{(c_w)_{i}} }{2}k^{2n-2i-2(c_w)_i} = \frac{k-1}{4}\sum_{i=1}^{i_0-1}(k^{2n-i} - k^{2n-i-(c_w)_i}).$$
    Since for each $i$, we have $(c_w)_i \leq n-i$ by definition of Lehmer code, we obtain
    \begin{equation*}
        \begin{split}
            I(k, n, w) & \leq   \frac{k^n(k-1)}{4} \sum_{i=1}^{i_0-1}(k^{n-i}-k^{n-i-(n-i)}) \\ & = \frac{k(k-1)}{4} \sum_{i=1}^{i_0-1}k^{n-1}(k^{n-i}-1) = \binom{k}{2} \sum_{i=1}^{i_0-1}k^{i-1}\binom{k^{n-i}}{2}.
        \end{split}\end{equation*}\end{proof}

One can observe that the upper bounds on the number of inversions and descents in Proposition~\ref{prop:IncreasingTail} are tight.
\begin{example}
Consider any positive integers $i_0, n, k$ such that $i_0 < n$ and $k \geq 2$. Let $w$ be a permutation in $S_n$ defined by $w_i = n+1-i$ for $i \in [i_0-1]$ and $w_i = i-i_0+1$ for $i \in \{i_0, i_0+1, \dots, n \}$. This is a special case of a valley permutation, where the increasing part consists of smaller numbers than the decreasing part. We obtain that the Lehmer code of this permutation is $(c_w)_j = n-i$ for each $i \in [i_0-1]$ and $(c_w)_i = 0$ for $i \in \{i+1, i+2, \dots, n\}$. Therefore we obtain from Theorem~\ref{thm:Lehmer} that the number of inversions in the stable configuration $\mathcal{C}_{k,n, w}$ resulting from firing strategy $F_w$ is $\binom{k}{2}\sum_{i=1}^{i_0-1} k^{i-1}\binom{k^{n-i}}{2}$. This is exactly the upper bound on the number of inversions in $\mathcal{C}_{k, n, w}$ for $w$ with increasing tail starting at $i_0$.

Also observe that Theorem~\ref{thm:DescentNumbers} and the fact that $\supp(c_w) = [i_0-1]$ imply that $\mathcal{C}_{k,n,w}$ has exactly $\sum_{i=1}^{i_0-1}(k-1)k^{k-1}$ descents. This is equal to the upper bound on the number of descents in $\mathcal{C}_{k,n,w}$ from Proposition~\ref{prop:IncreasingTail}. 
\end{example}

On a similar note, we calculate the lower bound for the number of inversions and descents in $\mathcal{C}_{k, n, w}$ in the case where $w$ has a decreasing tail.
\begin{proposition}\label{prop:DecreasingTail}
    If there exists an $i_0 \in \mathbb{N}$ such that for all indices $i\geq i_0$, we have $w_i > w_{i+1}$, then $\mathcal{C}_{k, n, w}$ has at least $\sum_{j=i_0}^{n-1}(k-1)k^{j-1}$ descents and at least $\frac{k^n(k-1)}{4}\left(\frac{k^{n-i_0+1}-1}{k-1} - (n - i_0 + 1)\right)$ inversions.
\end{proposition}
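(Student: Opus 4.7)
The plan is to extract information about the Lehmer code $c_w$ from the decreasing-tail hypothesis, and then plug that information into Theorem~\ref{thm:Lehmer} and Theorem~\ref{thm:DescentNumbers}. The key observation is that a decreasing suffix of $w$ forces the corresponding entries of $c_w$ to attain their maximal possible values, which produces lower bounds in both formulas.

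First I would show that the hypothesis $w_i > w_{i+1}$ for every $i \ge i_0$ (equivalently, $w_{i_0} > w_{i_0+1} > \dots > w_n$) implies $(c_w)_i = n-i$ for every $i \in \{i_0, i_0+1, \dots, n-1\}$. Indeed, for such $i$, every one of the $n-i$ entries to the right of position $i$ is strictly smaller than $w_i$, so the definition of the Lehmer code forces $(c_w)_i = n-i$. In particular, each such $i$ lies in $\supp(c_w)$. Applying Theorem~\ref{thm:DescentNumbers}, which is monotone in the support,
\[D(k,n,w) \;=\; \sum_{j\in\supp(c_w)}(k-1)k^{j-1} \;\ge\; \sum_{j=i_0}^{n-1}(k-1)k^{j-1},\]
which is the desired descent bound.

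Next I would apply Theorem~\ref{thm:Lehmer} in the form
\[I(k,n,w) \;=\; \frac{k^n(k-1)}{4}\sum_{i=1}^{n}\bigl(k^{n-i}-k^{n-i-(c_w)_i}\bigr).\]
Each summand is non-negative (since $(c_w)_i \ge 0$), so I can discard the terms with $i < i_0$ and $i = n$ to obtain
\[I(k,n,w) \;\ge\; \frac{k^n(k-1)}{4}\sum_{i=i_0}^{n-1}\bigl(k^{n-i}-k^{n-i-(c_w)_i}\bigr) \;=\; \frac{k^n(k-1)}{4}\sum_{i=i_0}^{n-1}\bigl(k^{n-i}-1\bigr),\]
using $(c_w)_i = n-i$ from the first step. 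A short computation with a geometric series yields
\[\sum_{i=i_0}^{n-1}\bigl(k^{n-i}-1\bigr) \;=\; \bigl(k + k^2 + \dots + k^{n-i_0}\bigr) - (n-i_0) \;=\; \frac{k^{n-i_0+1}-1}{k-1} - (n-i_0+1),\]
where the last equality comes from adding and subtracting the missing $k^0 = 1$ term. Combining these gives the claimed lower bound on $I(k,n,w)$.

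The argument is essentially bookkeeping: the only delicate point is matching the constant term in the geometric sum, where the missing $k^0$ on the geometric side and the extra $-1$ on the count-of-terms side precisely cancel to produce $\frac{k^{n-i_0+1}-1}{k-1} - (n-i_0+1)$ rather than a slightly offset expression. There is no conceptual difficulty beyond that, since both bounds follow from the monotonicity of the relevant formulas in $\supp(c_w)$ and in the entries of $c_w$.
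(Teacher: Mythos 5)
Your proposal is correct and follows essentially the same route as the paper: deduce $(c_w)_i = n-i$ on the decreasing tail, conclude $\{i_0,\dots,n-1\}\subseteq\supp(c_w)$ for the descent bound via Theorem~\ref{thm:DescentNumbers}, and drop the nonnegative terms with $i<i_0$ in the inversion formula of Theorem~\ref{thm:Lehmer} before summing the geometric series. The only cosmetic difference is that the paper keeps the vanishing $i=n$ term inside the sum while you discard it, which changes nothing.
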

\begin{proof}
    Because for all $i \geq i_0$, we have $w_i > w_{i+1}$, we find that $(c_{w})_i =  n-i$ for all $i \geq i_0$. Therefore, the zeros of the support of $c_w$ form a subset of $[i_0-1] \cup \{n\}$. Thus, by Theorem~\ref{thm:DescentNumbers}, there are at least $\sum_{j=i_0}^{n-1}(k-1)k^{j-1}$ descents in $\mathcal{C}_{k, n, w}$. In addition, by Theorem~\ref{thm:Lehmer}, we obtain
    \begin{equation*}\begin{split}
        I(k, n, w) & = \frac{k^n(k-1)}{4}\sum_{i=1}^n (k^{n-i} - k^{n-i-(c_w)_i}) \\  & \geq \frac{k^n(k-1)}{4}\sum_{i=i_0}^n (k^{n-i} - k^{n-i-(c_w)_i})  \\&  = \frac{k^n(k-1)}{4}\sum_{i=i_0}^n (k^{n-i} - 1)  = \frac{k^n(k-1)}{4}\left(\frac{k^{n-i_0+1}-1}{k-1} - (n - i_0 + 1)\right). \end{split}
    \end{equation*}\end{proof}

As was the case with Proposition~\ref{prop:IncreasingTail}, we find that the bounds on the numbers of descents and inversions from Proposition~\ref{prop:DecreasingTail} are tight.

\begin{example} Consider any positive integers $i_0, n, k$ such that $i_0 < n$ and $k \geq 2$. Let $w$ be a permutation in $S_n$ defined by $w_i = i$ for $i \in [i_0-1]$ and $w_i = n+i_0-i$ for $i \in \{i_0, i_0+1, \dots, n \}$. We obtain that the Lehmer code of this permutation is defined by $(c_w)_i = n-i$ for each $i \in \{i_0, i_0+1, \dots, n\}$ and $(c_w)_i = 0$ for $i \in [i_0-1]$. Therefore we obtain from Theorem~\ref{thm:Lehmer} that in the stable configuration $\mathcal{C}_{k,n, w}$ resulting from $F_w$, there are $I(k, n, w) = \frac{k^n(k-1)}{4}\sum_{i=i_0}^n (k^{n-i} - k^{n-i-(c_w)_i}) = \frac{k^n(k-1)}{4} \sum_{i=i_0}^n (k^{n-i} - 1) = \frac{k^n(k-1)}{4}\left(\frac{k^{n-i_0+1}-1}{k-1} - (n - i_0 + 1)\right)$ inversions. This is equal to the lower bound on the number of inversions in $\mathcal{C}_{k, n, w}$ from Proposition~\ref{prop:DecreasingTail} for $w$ with decreasing tail starting at $i_0$.

Also observe that Theorem~\ref{thm:DescentNumbers} and the fact that $\supp(c_w) = \{i_0, i_0 + 1, \dots, n\}$ imply that $\mathcal{C}_{k,n,w}$ has exactly $\sum_{i=i_0}^{n-1}(k-1)k^{k-1}$ descents. This is equal to the lower bound on the number of descents in $\mathcal{C}_{k,n,w}$ from Proposition~\ref{prop:DecreasingTail}.
\end{example}

\acknowledgements
\label{sec:ack}
We thank Professor Alexander Postnikov for suggesting the topic of labeled chip-firing on directed trees and helping formulate the proposal of this research problem, and for helpful discussions.  The MIT Department of Mathematics financially supports the first and second authors. 

All figures in this paper were generated using TikZ.

\nocite{*}
\bibliographystyle{abbrvnat}
\bibliography{DMTCS_Inaga_Khov_Luo_Bibliography}
\label{sec:biblio}

\end{document}